\newtheorem{lemma}{Lemma}[section]
\newtheorem{theorem}[lemma]{Theorem}
\newtheorem{corollary}[lemma]{Corollary}
\newtheorem{remark}[lemma]{Remark}
\newtheorem{example}[lemma]{Example}
\newtheorem{question}[lemma]{Question}
\numberwithin{equation}{section}
\DeclareMathOperator{\dist}{dist}
\newcommand{\C}{\mathbb{C}}
\newcommand{\N}{\mathbb{N}}
\newcommand{\R}{\mathbb{R}}
\newcommand{\T}{\mathbb{T}}
\newcommand{\Z}{\mathbb{Z}}
\newcommand{\D}{\mathbb{D}}
\newcommand{\cC}{\mathcal{C}}
\newcommand{\cA}{\mathcal{A}}
\begin{document}
\title[Toeplitz and Hankel operators]
{Toeplitz and Hankel operators between distinct Hardy spaces}

\author[Le\'snik]{Karol Le\'snik}
\address[Le\'snik]{%
Institute of Mathematics,
Pozna\'n University of Technology, 
ul. Piotrowo 3a, 
60-965 Pozna\'{n}, Poland}
\email{klesnik@vp.pl}

\begin{abstract}
The paper presents the background for Toeplitz and Hankel  operators acting between distinct Hardy type spaces 
over the unit circle $\T$. We characterize possible symbols of such operators and prove general versions of Brown-Halmos and Nehari theorems. The lower bound for measure of noncomactness of Toeplitz operator is also found. Our approach allows Hardy spaces associated with arbitrary rearrangement invariant spaces, but a main part of results is new even for the classical case of $H^p$ spaces. 
\end{abstract}
\keywords{%
Toeplitz operator, 
Hankel operator, 
rearrangement-invariant  spaces, 
Hardy spaces}
\subjclass[2010]{47B35, 46E30, 30H10}
\vspace{-5 mm}

\maketitle

\rightline{\it Dedicated to the memory of}
\rightline{\it Professor Pawe\l\ Doma\'nski}
\rightline{\it (1959--2016)}
\vspace{5 mm}


\section{Introduction}
Classical Toeplitz $T_a$ and Hankel $H_a$ operators on Hardy space $H^2$ (on the unit circle $\T$) are defined by 
\begin{equation}\label{TH}
T_a\colon f\mapsto P(af)\ {\rm\ and\ } H_a\colon f\mapsto P(aJf),
\end{equation}
where $P$ is the Riesz projection, $J$ is the flip operator and the function $a\in L^{\infty}$ is called the symbol of $T_a$ and $H_a$, respectively. 

Theory of Toeplitz and Hankel operators acting on $H^p$ spaces, as well as on a number of another function spaces is very well developed and still widely investigated. Moreover, such operators are interesting not only from the point of view of operator theory, but they are intimately connected with harmonic analysis, prediction theory and approximation theory (see for example \cite{Pel03}).
However, in the literature Toeplitz and Hankel operators are mainly considered to act from one to the same space.

Suppose now we leave the above definition (\ref{TH}) unchanged, but take a symbol $a\in L^r$ for some $1<r<\infty$. In such a case $T_a$ and $H_a$ need not be bounded on any $H^p$ space, but they act boundedly from $H^p$ to $H^q$ if $1<q<p<\infty$ and $\frac{1}{p}+\frac{1}{r}=\frac{1}{q}$. It appears that almost nothing is known about such operators. Among a huge number of papers considering Toeplitz and Hankel operators we were able to find only few, where they act between distinct spaces. This number includes  papers of Tolokonnikov \cite{Tol87} and of Tolokonnikov and Volberg \cite{TV87}. In the first the symbols  of Toeplitz and Hankel operators acting between distinct $H^p$ spaces were determined, while the second is devoted to approximation problem connected with the representation of Hankel operators considered between abstract Hardy type spaces. Except these two papers one can find investigations of Toeplitz and Hankel operators acting from some Hardy type space into $H^1$ in the Janson, Peetre and Semmes paper \cite{JPS84} and a generalization of these investigations for Hardy spaces over more complicated domains in \cite{BG10}. 

The goal of this paper is to present an unified background for Toeplitz and Hankel operators acting between distinct Hardy spaces, i.e. $T_a,H_a\colon H[X]\to H[Y]$, where $X,Y$ are rearrangement invariant spaces. The main results are general versions of Brown--Halmos and Nehari theorems. In such a general situation symbols $a$ belong to the space of pointwise multipliers $M(X,Y)$. In consequence, a deeper theory of function spaces, pointwise multipliers, pointwise products and factorization comes into play.  

The paper is organized as follows. In the second section  we collect required definitions and notation, that will be used through the paper. The third section contains a number of technical results describing basic properties of Hardy type spaces built upon rearrangement invariant function spaces on the unit circle $\T$. 

The fourth section is devoted to Toeplitz operators. In the classical case of $H^2$ the following theorem characterizes bounded Toeplitz operators.
\newline
{\bf Brown--Halmos theorem \cite{BH63/64}} {\it
A bounded linear operator $A\colon H^2\to H^2$ satisfies
\begin{equation}\label{Toep0}
\langle A\chi_j,\chi_k\rangle=a_{k-j}
\end{equation}
for some sequence 
$(a_k)_{k\in \mathbb{Z}}$ and all $j,k\geq 0$ (where $(\chi_n)_{n=0}^{\infty}$ is a standard basis of $H^2$) if and only if there exists a unique $a\in L^{\infty}$ such that $A=T_a$, i.e. $A\colon f\mapsto P(af)$ and  $\widehat{a}(n)=a_n$ for all $n\in\Z$.  Moreover,
\begin{equation}\label{Toep0norm}
\|a\|_{L^{\infty}}=\|T_a\|_{H^2\to H^2}.
\end{equation}
}\newline 
It not only identifies possible symbols of bounded  Toeplitz operators on $H^2$, but mainly says  that each operator satisfying (\ref{Toep0}), i.e. having Toeplitz matrix with respect to the standard basis of $H^2$, has the representation of the form $T_a$, where $a\in L^{\infty}$ is uniquely determined. We give an analogue of the  Brown--Halmos theorem for the case of operators  acting from $H[X]$ to $H[Y]$, under some mild assumptions on spaces $X,Y$. The result seems to be new even in the case of $T_a\colon H^p\to H^q$. Let us mention also, that the version of Brown--Halmos theorem  for the case $X=Y$ has been already proved  in \cite{K04}, but even in this particular case our assumptions are less restrictive. Moreover, we discuss also the case of nonseparable spaces $X$ and $Y$. 

In the main, fifth section, Hankel operators are taken into account. While the previous section is rather analogous to the classical case, except some technicalities, situation for Hankel operators makes much more interesting. Let us recall the statement of the classical Nehari theorem. \newline
{\bf Nehari theorem \cite{Neh57}} {\it 
A continuous linear operator $A:H^2\to H^2$ satisfies 
\begin{equation}\label{Hankel condition0}
\langle A\chi_j,\chi_k\rangle=a_{k+j+1} 
\end{equation}
for some sequence 
$(a_k)_{k>0}$ and all $j,k\geq 0$ if and only if there exists $a\in L^{\infty}$ (not unique) such that $\hat a(n)=a_n$ for each
$n>0$ and $A=H_a$, i.e. $A:f\mapsto PaJf$. Moreover, 
\[
\|H_a\|_{H^2\to H^2}=\dist_{L^{\infty}}(a,\overline{H^{\infty}}).
\]}\newline 
Thus, the theorem characterizes operators with Hankel matrices and their symbols.  However, we point out that, in contrast to Brown--Halmos theorem, a symbol $a$ is not unique (i.e. the operator remains the same if $a$ is modified by adding arbitrary function $b$ satisfying $Pb=0$, since only Fourier coefficients of $a$ for $n>0$ appears in (\ref{Hankel condition0})). 
In this section we prove a general Nehari theorem for Hankel operators acting from $H[X]$ to $H[Y]$, under some assumptions on spaces $X,Y$. 
Let us mention, that modern proofs of Nehari theorem base on the (strong) factorization of $H^1$ function $f$ into product $f=gh$, where $g,h\in H^2$ (see for example \cite[Theorem 2.11]{BS06} or \cite[Theorem 1.1]{Pel03}). A direct translation of this idea together with the Lozanovskii factorization theorem for Hardy spaces (i.e. $H[X]\odot H[X']=H^1$, where $X'$ is the K\"othe dual of $X$) was used in \cite{K04} to prove the Nehari theorem for $H_a:H[X]\to H[X]$ (see also \cite{Ha98}, where the same subject was undertaken). Of course, the generalized Lozanovskii-like factorization would do the job also in our setting, however the assumption that $X$ factorizes $Y$ (i.e. $X\odot M(X,Y)=Y$) is rather restrictive (see \cite{KLM14} for extensive studies of this problem) and we expect weaker assumptions for the general Nehari theorem. On the other hand, as it was noticed by Coifman, Rochberg and Weiss \cite{CRW76} (see also \cite{JPS84} and \cite{TV87}) the strong factorization may be  replaced by the weak one (i.e. $f=\sum_kg_kh_k$ instead of $f=gh$). 
However, theory of such factorization is not very well developed and it is not at all applicable in a general setting (the space of symbols of Hankel operators were described in terms of weak factorization in \cite{TV87}, but it appeared that the authors were able to give concrete representation  only in cases when strong factorization holds). 
Therefore, instead of weak factorization, we base our proof of general Nehari theorem on the concept of Banach envelopes, which works pretty well in this setting and, indeed,  gives a weak factorization, as a byproduct  (see discussion after Lemma \ref{Ban-env-Cor}). This section is finished by an extensive discussion on assumptions of the main theorem and we give some examples for concrete types of spaces, like Orlicz and Lorentz spaces. 

We finish the paper estimating the measure of noncompactness of Toeplitz operator $T_a$ in terms of Fourier coefficients of its symbol $a$.

\section{Notions and notations}

Let $\T$ be the unit circle equipped with the normalized Lebesgue measure 
$dm(t)=|dt|/(2\pi)$. Let $L^0:=L^0(\T,m)$ be the space of all 
measurable complex-valued almost everywhere finite functions on $\T$. 
As usual, we do not distinguish functions, which are equal almost 
everywhere (for the latter we use the standard abbreviation a.e.). 
The characteristic function 
of a measurable set $E\subset\T$ is denoted by $\chi_E$. 

A complex quasi-Banach space $X\subset L^0(\T,m)$ is called a
quasi-Banach function space (q-B.f.s for short) if\\
(a)  $f\in X$, $g\in L^0$ and $|g| \le |f|$  a.e. $\Rightarrow \ g\in X$ and $\|g\|_X \le \|f\|_X$
(the ideal property),\\
(b) $L^{\infty}\subset X$, \\
(c) $X$ has the semi-Fatou property, i.e.  $(f_n)\subset X$, $f\in X$ and
$0\le f_n\uparrow f$ a.e. implies $\|f\|_X = \sup_{n\in\N}\|f_n\|_X$.

If $X$ as above is a Banach space, we will call it the Banach function space (B.f.s. for short). 

Spaces as above are also called K\"othe spaces (see for example \cite[page 28]{LT79}, \cite[Chapter 15]{Za67}), while the name B.f.s. comes from \cite[Chapter 1]{BS88}, \cite[p. 114]{MN91} or \cite[p. 161]{Mal89} (cf. \cite[pages 40--43]{KPS82}). Notice however, that our definition differs a little from respective definitions in the mentioned books. In fact, assumptions in our definition are weaker than in \cite{BS88}, but stronger than in \cite{LT79, Za67, KPS82, Mal89, MN91}. The point (a) is crucial in all of them. Point (b) is satisfied by each rearrangement invariant spaces and we will focus only on such spaces, thus we assumed it already for B.f. spaces to simplify presentation. Finally, semi-Fatou property from the point (c) will be crucial in few places, but we cannot replace it by the stronger Fatou property (which is assumed for B.f. spaces in \cite{BS88}), because we will work a lot with subspaces of order continuous elements, which, in general, need not satisfy the Fatou property. Finally, notice that classical spaces, such as Lebesgue, Orlicz and Lorentz spaces, fulfill conditions of our definition. 

It is known, that for q-B.f. spaces $X,Y$ inclusions $X\subset Y$ are always continuous, i.e. there is $C>0$ such that $\|f\|_Y\leq C\|f\|_X$ for each $f\in X$. It follows, for example, from continuity of embedding $X,Y\subset L^0$ (see for example \cite[Proposition 2.7.2]{Ro85}) and the closed graph theorem (see \cite[pages 9--11]{KPR84} for discussion on classical theorems in quasi-Banach case). 
We will write $X=Y$ if $X$ and $Y$ coincide as sets 
and there are positive constants $c_1,c_2$ such that  
$c_1\|f\|_X\le \|f\|_Y\le c_2\|f\|_X$ for all $f\in X$ (the latter inequalities will be also denoted as $\|f\|_Y\approx \|f\|_X$), and 
$X\equiv Y$ if $c_1=c_2=1$. 

A q-B.f.s. $X$ has the Fatou property ($X\in (FP)$ for short) 
when given a sequence $(f_n)_{n\in\N}\subset X$ and $f\in L^0$ satisfying
$0\le f_n\uparrow f$ a.e. as $n\to\infty$ and 
$\sup_{n\in\N}\|f_n\|_X<\infty$ there holds $f\in X$ and 
$\|f\|_X = \sup_{n\in\N}\|f_n\|_X$. 

Recall that $f\in X$ is said to be an order continuous element, 
if for each $(f_n)_{n\in\N}\subset X$ satisfying
$0\le f_n\le |f|$  for all $n\in\N$ and $f_n\to 0$ a.e. as $n\to\infty$, there holds $\|f_n\|_X\to 0$ 
as $n\to\infty$. The subspace of order continuous elements of $X$ 
is denoted by $X_o$. Evidently, $X_o$ enjoys the semi-Fatou property. 
We say that $X$ is order continuous, when $X=X_o$, which is equivalent with separability of $X$. 

For a q-B.f.s. $X$, its associate space (K\"othe dual) 
$X'$ is defined as the space of functions $g\in L^0$ satisfying
\[
\|g\|_{X'}:=\sup\left\{
\int_\T |f(t)g(t)|dm(t)\colon \|f\|_X \le 1
\right\}<\infty.
\]
Notice that $X'$ is nontrivial and $X'\in (FP)$ for each B.f.s. $X$. However, $X'$ may be trivial, i.e. $X'=\{0\}$, when $X$ is just a q-B.f.s.. 
For example, $(L^p)'=\{0\}$ when $0<p<1$. It is  known that a B.f.s. $X$ has the Fatou property if and only if $X''\equiv X$ (see \cite[p. 30]{LT79}). 
Moreover, when $X$ is a B.f.s., the property (c) of definition implies that
\begin{equation}\label{eq:pol-norm-0}
\|f\|_X=\sup\{|\langle f,g\rangle| : g\in X',\ \|g\|_{X'}\le 1\},
\end{equation}
i.e. $\|f\|_X=\|f\|_{X''}$ for each $f\in X$ (see \cite[Proposition 1.b.18]{LT79}). 
Finally, a B.f.s. $X$ satisfies 
\begin{equation}\label{drugi dual}
(X_o)'\equiv X',
\end{equation}
provided  $L^{\infty}\subset (X')_o$. It follows directly from definitions of norms in $X',(X_o)'$  and  the Lebesgue dominated convergence theorem, since the assumption  $L^{\infty}\subset X_o$ implies that simple functions are in $X_o$ and each function from $X$ is a pointwise (a.e.) limit of an increasing sequence of simple functions.


The distribution function $\mu_f$ of $f\in L^0$ is given by
\[
\mu_f(\lambda)=m\{t\in\T\colon |f(t)|>\lambda\},\quad\lambda\ge 0.
\]
Two functions $f,g\in L^0$ are said to be equimeasurable
if $\mu_f(\lambda)=\mu_g(\lambda)$ for all $\lambda\ge 0$. The non-increasing
rearrangement $f^*$ of  $f\in L^0$ is defined by 
\[
f^*(x)=\inf\{\lambda \colon \mu_f(\lambda)\le x\},\quad x\ge 0.
\]

A q-B.f.s. $X$  is called
re\-ar\-range\-ment-invariant (r.i. for short) if for every pair of equimeasurable functions 
$f,g \in L^0$, $f\in X$ implies that $g\in X$ and
$\|f\|_X=\|g\|_X$. 
Lebesgue, Orlicz and Lorentz spaces are examples of r.i. q-B.f. spaces. In general, each r.i. B.f.s. $X$ satisfies inclusion $X\subset L^1$. Moreover, if $X$ is r.i. B.f.s on $\T$ and $X\not =L^{\infty}$, then $L^{\infty}\subset X_o$. We refer to \cite{KPS82} and \cite{BS88} for more informations on non-increasing rearrangements and r.i. spaces.

Let $X$ be a r.i. q-B.f. space. For each $s\in\R_+$ the dilation operator $D_s$ is defined as
\[
(D_s f)(e^{i\theta})=
\left\{
\begin{array}{ll}
f(e^{i\theta s}), & \theta s\in[0,2\pi),\\
0,     &\theta s\not\in[0,2\pi),
\end{array}
\right.
\quad \theta \in[0,2\pi).
\]
It is known (see, for example, \cite{KPS82}) that $D_s$ is bounded on $X$ for each $s>0$ and  limits
\[
\alpha_X=\lim_{s\to 0}\frac{\log \|D_{1/s}\|_{X\to X}}{\log s},
\quad
\beta_X=\lim_{s\to \infty}\frac{\log \|D_{1/s}\|_{X\to X}}{\log s}
\]
exist. The numbers $\alpha_X$ and $\beta_X$ are called 
 lower and upper Boyd indices of $X$, respectively. For an arbitrary
r.i. B.f.s. $X$, its Boyd indices belong to $[0,1]$ and $\alpha_X\le\beta_X$. Moreover, 
\begin{equation}\label{indeksysuma}
\alpha_X+\beta_{X'}=1,
\end{equation}
when $X$ is a B.f.s., thanks to the semi-Fatou property of $X$ (see \cite[Theorem 4.11, p. 106]{KPS82}). 
We say that  Boyd indices are nontrivial if $\alpha_X,\beta_X\in(0,1)$.
More informations on Boyd indices of r.i. B.f. spaces may be found in \cite{BS88, KPS82, LT79}, while the quasi-Banach case was considered in \cite{MS96,Di15}. 

For two B.f. spaces $X$ and $Y$, let $M(X,Y)$ denote the space of pointwise multipliers from $X$ to 
$Y$ defined by 
\[
M(X,Y)=\{f\in L^0 \colon fg\in Y\text{ for all } g\in X\},
\]
equipped with the natural operator norm 
\[
\|f\|_{M(X,Y)}=\sup_{\|g\|_X\le 1}\|fg\|_Y.
\]
It is known (\cite[Theorem~2.2]{KLM13}) that $M(X,Y)$ is r.i. space when $X,Y$ are so. 
Note that it may happen that $M(X,Y)$ contains only the zero function.
For instance, if $1\le p<q<\infty$, then $M(L^p,L^q)=\{0\}$. In general, for two r.i. B.f.s. $X,Y$, $M(X,Y)\not =\{0\}$ if and only if $X\subset Y$. 
On the other hand, if $1\le q\le p\le\infty$ and $1/r=1/q-1/p$, then 
$M(L^p,L^q)\equiv L^r$. Also  $M(X,X)\equiv L^\infty$ for arbitrary B.f.s. $X$. 
We will need one more easy fact about space $M(X,Y)$, for which we cannot give any reference, thus let us state it and prove.

\begin{lemma}\label{order multipliers}
Let $X,Y$ be B.f. spaces such that $X\subset Y$, $L^{\infty}\subset X_o$ and $Y\in (FP)$. Then  $M(X_o,Y)\equiv M(X,Y)$.
\end{lemma}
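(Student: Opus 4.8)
The inclusion $M(X,Y)\subseteq M(X_o,Y)$ together with the norm inequality $\|f\|_{M(X_o,Y)}\le\|f\|_{M(X,Y)}$ is immediate: since $X_o$ is an ideal subspace of $X$ with $\|g\|_{X_o}=\|g\|_X$ for $g\in X_o$, any $f$ with $fg\in Y$ for all $g\in X$ certainly has $fg\in Y$ for all $g\in X_o$, and the supremum defining $\|f\|_{M(X_o,Y)}$ is taken over the smaller set $\{g\in X_o:\|g\|_X\le1\}$. So the whole content is the reverse direction.

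For that, let $f\in M(X_o,Y)$ and put $c:=\|f\|_{M(X_o,Y)}$. Fix $g\in X$; passing to $|f|$ and $|g|$ and using the ideal property of $Y$, we may assume $f,g\ge0$. Consider the truncations $g_n:=\min(g,n)$. Each $g_n\in L^{\infty}\subseteq X_o$ by hypothesis, and $0\le g_n\uparrow g$ a.e., while the ideal property of $X$ gives $\|g_n\|_X\le\|g\|_X$. Hence $fg_n\in Y$ with $\|fg_n\|_Y\le c\,\|g_n\|_X\le c\,\|g\|_X$ for every $n$, and $0\le fg_n\uparrow fg$ a.e. Since $Y\in(FP)$ and $\sup_n\|fg_n\|_Y\le c\,\|g\|_X<\infty$, the Fatou property yields $fg\in Y$ and $\|fg\|_Y=\sup_n\|fg_n\|_Y\le c\,\|g\|_X$. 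As $g\in X$ was arbitrary, $f\in M(X,Y)$ and $\|f\|_{M(X,Y)}\le c=\|f\|_{M(X_o,Y)}$.

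Combining the two inequalities gives $\|f\|_{M(X,Y)}=\|f\|_{M(X_o,Y)}$ and therefore $M(X_o,Y)\equiv M(X,Y)$. The hypotheses enter exactly as follows: $L^{\infty}\subseteq X_o$ is what places the truncations $g_n$ inside $X_o$ (so that $\|fg_n\|_Y$ can be controlled by $c$), and $Y\in(FP)$ is precisely what upgrades the uniform norm bound on the increasing sequence $(fg_n)$ to membership of the limit $fg$ in $Y$ with the correct norm; the standing assumption $X\subset Y$ only guarantees that both multiplier spaces are nontrivial and plays no role in the argument. I do not expect any genuine obstacle here — the only point requiring care is the choice of the approximating sequence, which must lie in $X_o$ rather than merely in $X$, and this is exactly why the truncations (bounded functions) are used instead of an arbitrary increasing approximation of $g$.
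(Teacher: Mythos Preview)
Your proof is correct and takes a genuinely different route from the paper's. The paper argues via duality: it invokes the identity $M(Z,Y)\equiv M(Y',Z')$ (valid when $Y\in(FP)$) together with $(X_o)'\equiv X'$ (which uses $L^\infty\subset X_o$) to obtain the chain
\[
M(X_o,Y)\equiv M(Y',(X_o)')\equiv M(Y',X')\equiv M(X'',Y'')\equiv M(X'',Y),
\]
and then sandwiches $\|f\|_{M(X,Y)}$ between $\|f\|_{M(X_o,Y)}$ and $\|f\|_{M(X'',Y)}$ using $X_o\subset X\subset X''$. Your argument is more elementary: it bypasses the duality formula for multiplier spaces entirely and works directly with bounded truncations and the Fatou property of $Y$. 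The paper's approach yields the slightly stronger statement $M(X_o,Y)\equiv M(X,Y)\equiv M(X'',Y)$ as a byproduct, while yours is self-contained and makes transparent exactly where each hypothesis is used. (Incidentally, the LaTeX source contains, after the end of the document, a discarded alternative proof that is essentially your argument, with simple-function approximants in place of truncations.)
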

\proof
It is known (see \cite[(vii) on p. 879]{KLM13}) that $M(Z,Y)\equiv M(Y',Z')$, when $Y\in (FP)$. Applying this together with (\ref{drugi dual}) we get 
\[
M(X_o,Y)\equiv M(Y',(X_o)')\equiv M(Y',X')\equiv M(X'',Y'')\equiv M(X'',Y).
\]
On the other hand, the following inclusions always hold $X_o\subset X\subset X''$, thus
\[
\|f\|_{M(X_o,Y)}\leq \|f\|_{M(X,Y)}\leq \|f\|_{M(X'',Y)},
\]
which means that $M(X_o,Y)\equiv M(X,Y)\equiv M(X'',Y)$. 
\endproof

The space $M(X,Y)$ may be regarded as division of $Y$ by $X$. In virtue of this point of view we define an opposite construction, that is the pointwise product space. 
Given two q-B.f. spaces $X$ and $Y$, the pointwise product $X\odot Y$ is defined by
\begin{equation}\label{product}
X\odot Y=\{gh\colon g\in X,\ h\in Y\}
\end{equation}
equipped with the functional $\|\cdot\|_{X\odot Y}$ 
\[
\|f\|_{X\odot Y}=\inf\{\|g\|_X\|h\|_Y\colon f=gh,\ g\in X,\ h\in Y\}.
\]
It follows from the ideal property of B.f. spaces that $X\odot Y$ is a linear space (products of sequence spaces without the ideal property have been investigated in \cite{Bu87, BG87}).
Given two B.f. spaces $X,Y$ we say that $X$ factorizes $Y$ when $X\odot M(X,Y)=Y$ (factorization of function spaces is widely discussed in \cite{KLM14}). Using this notion, the classical Lozanovskii factorization theorem reads as follows
\begin{equation}\label{Loz fact}
X\odot X'\equiv L^1,
\end{equation}
where $X$ is a  B.f.s. (\cite[Theorem 6]{Lo69}, cf. \cite[Proposition 6]{Re88}).

The Calder\'on--Lozanovskii construction $X^{1-\theta}Y^{\theta}$ is defined for $0<\theta <1$ and  two q-B.f. spaces $X,Y$  by 
\[
X^{1-\theta}Y^{\theta}=\{f\in L^0:|f|=g^{1-\theta}h^{\theta},\ g\in X,\ h\in Y\},
\]
with the (quasi) norm given by 
\begin{equation}\label{CLnorm}
\|f\|_{X^{1-\theta}Y^{\theta}}=\inf\{\max\{\|g\|_X,\|h\|_Y\}:|f|=g^{1-\theta}h^{\theta},\ g\in X,\ h\in Y\},
\end{equation}
(for more informations see \cite{Lo69, Re88}, \cite[Chapter IV]{KPS82}, \cite[Chapter 15]{Mal89}). The second Lozanovskii's theorem that we shall need is the duality theorem, which states that
\begin{equation}\label{lozan dual}
[X^{1-\theta}Y^{\theta}]'\equiv (X')^{1-\theta}(Y')^{\theta}, 
\end{equation}
for B.f. spaces $X,Y$ (\cite[Theorem 1]{Lo69}, cf. \cite[Theorem 1]{Re88}).

For a q-B.f.s. $X$ and $p>1$ one defines its $p$-convexification ($p$-concavication when $0<p<1$) $X^{(p)}$ as 
\[
X^{(p)}=\{f\in L^0\colon |f|^p\in X\}
\]
with the quasi-norm given by $\|f\|_{X^{(p)}}=\||f|^p\|_{X}^{1/p}$ (see \cite[pages 40--59]{LT79}). 
The product space $X\odot Y$ is intimately related with Calder\'on--Lozanovskii construction. In fact, $X\odot Y$ may be represented in the following way
\begin{equation}\label{repres}
X\odot Y\equiv (X^{1/2}Y^{1/2})^{(1/2)},
\end{equation}
(see \cite[Theorem 1 (iv)]{KLM14}). In particular, it explains that $X\odot Y$ is a quasi-Banach space. We will use (\ref{repres}) few times in the sequel, since it allows us to apply the known theory of Calder\'on--Lozanovskii construction to product spaces. 

For $n\in\Z$ and $t\in\T$, let 
$\chi_n(t):=t^n$. The Fourier coefficients of a function $f\in L^1$
are given by
\[
\widehat{f}(n)=\langle f,\chi_n\rangle, \quad n\in\Z,
\]
where
\[
\langle f,g\rangle=
\int_\T f(t)\overline{g(t)}\,dm(t).
\]
Let further $\mathcal{P}=\{\sum_{i=-n}^n\alpha_i\chi_i: \alpha_i\in\C,\ n\geq 0\}$ and $\mathcal{P}_A=\{\sum_{i=0}^n\alpha_i\chi_i :  \alpha_i\in\C,\ n\geq 0\}$ denote the sets of all trigonometric polynomials and all analytic trigonometric polynomials, respectively.

The Riesz projection $P$  is defined for  $f\in L^1$, as 
\[
P\colon f\mapsto \frac{1}{2}(f+i\tilde f+\hat f(0)),
\]
where $\tilde f$ is the conjugate function of $f$ (see \cite[Chapter III]{Kat76} or \cite[Chapter III]{Gar06} for precise definitions). For each $f\in L^p$, $p>1$ equivalent definition of $P$, i.e. 
\[
P\colon \sum_{n=-\infty}^{\infty}\widehat f(n)t^n\mapsto \sum_{n=0}^{\infty}\widehat f(n)t^n,
\]
is meaningful. 
It is known that $P$ is bounded on r.i. q-B.f.s. $X$  if and only if $X$ has nontrivial Boyd indices (in the case of Banach spaces  it follows directly from the Boyd theorem \cite{LT79,KPS82}, while the quasi-Banach case was considered in \cite{Di15} and, before, in \cite{MS96} with an additional assumption of the Fatou property). In the paper $P$ will always stand  for the Riesz projection.

Let $X$ be a r.i. q-B.f.s. such that $X\subset L^1$ (inclusion $X\subset L^1$ holds for each r.i. B.f.s.).  The Hardy space $H[X]$ is defined by
\[
H[X]=\big\{f\in X\colon \widehat{f}(n)=0\quad\mbox{for all}\quad n<0\big\},
\]
with the quasi-norm inherited from $X$ (see for example \cite{Xu92}, \cite{MM09} or \cite{MRP15}, where this kind of Hardy spaces is considered).
Let us mention that Hardy spaces may be equivalently regarded as spaces of analytic functions on the unit disc $\D$, since the convolution with the Poisson kernel gives the analytic extension of each function from $H[X]$ on $\T$ to the whole $\D$. 
If $1\le p\le\infty$, then $H^p:=H[L^p]$ is the classical Hardy space (see for example \cite{Hof62,Du70,Kat76}).

We shall use also the following variants of Hardy spaces
$$
\overline{H[X]}=\{\bar f\colon  f\in H[X]\}
$$
and 
$$
H_-[X]=\{f\chi_{-1}\colon f\in \overline{H[X]}\}.
$$

Finally, we can introduce main actors of the paper -- Toeplitz and Hankel operators. We will extend the definition (\ref{TH}) to allow possibly large class of symbols, but at this moment we say nothing about boundedness. 
Thus, for a given $a\in L^1$ the Toeplitz $T_a$ operator may be formally defined on 
$\mathcal{P}_A$ (or on $H^{\infty}$) by 
\[
T_a\colon  f\to P(af).
\]
The flip operator $J: L^1\to L^1$ is defined as 
\[
Jf(t)=t^{-1}f(t^{-1}).
\]
Of course, it is  isometry on each r.i. B.f.s. $X$. Consequently, the Hankel operator $H_a$ may be defined on $\mathcal{P}_A$ (or on $H^{\infty}$) by 
\[
H_a\colon f\to P(aJf).
\]
Notice that the definition of Toeplitz operator is rather the same through  literature, while definitions of Hankel operator vary. The definition proposed above corresponds to the one from \cite{BS06} and $H_a$ acts into the space of analytic functions, while, for example in \cite{Pel03}, $H_a$ maps analytic functions into anti-analytic ones. Anyhow, the merit is preserved in any case and  Toeplitz operators have Toeplitz matrices (i.e. $\langle T_a\chi_j,\chi_k\rangle=\hat a(k-j)$ for all $k,j\geq 0$), while Hankel operators are representable by Hankel matrices (i.e. $\langle H_a\chi_j,\chi_k\rangle=\hat a(k+j+1)$ for all $k,j\geq 0$).

We will also consider Toeplitz and Hankel operators on nonseparable spaces. In such a case the above definition of Toeplitz and Hankel operators has to be done more precise, since behavior on polynomials will not determine them. Namely, if $X$ is r.i. B.f.s., then assumption $a\in X'$ ensures that $af, aJf\in L^1$ for each $f\in H[X]$ thus definitions of $T_a$ and $H_a$ make sense. 

\section{Preliminaries}

Before we will be ready to state the main results, we need to collect a sequence of technical results concerning the structure of $H[X]$ spaces. 
Recall that the Fej\'er kernel $(K_n)$ is defined as
\[
K_n(t)=\sum_{k=-n}^n\left(1-\frac{|k|}{n+1}\right)\chi_k(t),
\quad t\in\T.
\]

\begin{lemma}\label{le:dens}
Let $X$ be a r.i. B.f. space. 
If $X$ is separable, then
\begin{enumerate}
\item[(a)]
for every $f\in X$
\[
\lim_{n\to\infty}\|f-f*K_n\|_X=0;
\]

\item[(b)]
$\mathcal{P}$ is dense in $X$;

\item[(c)]
$\mathcal{P}_A$ is dense in $H[X]$.


\end{enumerate}
\end{lemma}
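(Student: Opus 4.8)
The plan is to deduce all three parts from the Fej\'er means $\sigma_n f:=f*K_n$, which are trigonometric polynomials of degree $n$, so the whole lemma reduces to two facts about the operators $\sigma_n$ on $X$: that each $\sigma_n$ is a contraction, and that $\sigma_n g\to g$ in $X$ for $g$ ranging over a dense subspace. Granting these, part (a) follows from a routine $3\varepsilon$ argument, and parts (b), (c) follow formally since $\sigma_n f\in\mathcal P$ always and $\sigma_n f\in\mathcal P_A$ whenever $f\in H[X]$. I expect no serious obstacle here; the only points needing attention are the legitimacy of Minkowski's integral inequality in an abstract B.f.s.\ and the fact that order continuity of $X$ amounts to a dominated convergence theorem in $X$ --- and the hypothesis of separability is used precisely to put these tools at our disposal, via $X=X_o$.

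For the contraction property I would write, for $f\in X\subset L^1$,
\[
(f*K_n)(t)=\int_\T f(ts^{-1})K_n(s)\,dm(s);
\]
since $K_n\ge 0$ and $\int_\T K_n\,dm=1$, this presents $f*K_n$ as an average of rotates of $f$, and every rotate is equimeasurable with $f$, hence has the same $X$-norm because $X$ is rearrangement invariant. Now $f*K_n\in\mathcal P\subset L^\infty\subset X$, so its norm is computed by the duality formula $\|f*K_n\|_X=\sup\{|\langle f*K_n,g\rangle|:g\in X',\ \|g\|_{X'}\le1\}$ of (\ref{eq:pol-norm-0}); for such $g$, Tonelli's theorem gives
\[
|\langle f*K_n,g\rangle|\le\int_\T K_n(s)\Big(\int_\T|f(ts^{-1})||g(t)|\,dm(t)\Big)dm(s)\le\int_\T K_n(s)\,\|f\|_X\|g\|_{X'}\,dm(s)\le\|f\|_X,
\]
i.e.\ $\|\sigma_n f\|_X\le\|f\|_X$. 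Next, separability forces $X\ne L^\infty$ and $X=X_o$, so (as recorded after (\ref{drugi dual})) the simple functions, and a fortiori $L^\infty$, are dense in $X$. For $g\in L^\infty$ the classical Fej\'er theorem gives $\sigma_n g\to g$ a.e., while $|\sigma_n g-g|\le 2\|g\|_{L^\infty}\chi_\T\in X$; since order continuity of $X$ is exactly the statement that dominated convergence holds in $X$, we get $\|\sigma_n g-g\|_X\to0$. Then for $f\in X$ and $\varepsilon>0$ choose $g\in L^\infty$ with $\|f-g\|_X<\varepsilon$ and estimate
\[
\|f-\sigma_n f\|_X\le\|f-g\|_X+\|g-\sigma_n g\|_X+\|\sigma_n(g-f)\|_X\le 2\varepsilon+\|g-\sigma_n g\|_X,
\]
whose last term tends to $0$; this proves (a).

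Finally, (b) is immediate: $\sigma_n f\in\mathcal P$ for every $f\in X$, so (a) exhibits $\mathcal P$ as dense in $X$. For (c), if $f\in H[X]$ then $\widehat{\sigma_n f}(k)=\bigl(1-\tfrac{|k|}{n+1}\bigr)\widehat f(k)$, which vanishes for all $k<0$ since $\widehat f(k)=0$ there; hence $\sigma_n f\in\mathcal P_A$, and as $H[X]$ carries the norm of $X$, (a) gives $\sigma_n f\to f$ in $H[X]$, so $\mathcal P_A$ is dense in $H[X]$.
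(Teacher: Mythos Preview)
Your proof is correct. The paper's route is shorter but less self-contained: it observes that separability gives $X=X_o$, cites \cite[Chap.~3, Lemma~6.3]{BS88} for density of continuous functions in $X$, notes that this makes $X$ a homogeneous Banach space in Katznelson's sense, and then invokes \cite[Chap.~I, Theorem~2.11]{Kat76} for the convergence of Fej\'er means in any such space; parts (b) and (c) are then deduced exactly as you do. Your argument instead unpacks what lies behind Katznelson's theorem --- uniform boundedness of the $\sigma_n$ plus convergence on a dense subspace --- and supplies both ingredients directly from the r.i.\ and order-continuity structure: the duality formula (\ref{eq:pol-norm-0}) together with rotation invariance gives the contraction bound, and dominated convergence in $X=X_o$ handles $g\in L^\infty$. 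The paper's approach is faster to write but externalizes the work; yours is elementary and stays entirely within the paper's own toolkit. Incidentally, the paper uses a close cousin of your contraction argument --- via interpolation rather than duality --- a few lines later in the proof of Lemma~\ref{le:pol-norm}.
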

\begin{proof}
(a) It follows from \cite[Chap.~3, Lemma~6.3]{BS88} that continuous functions are dense in $X$, since separability of $X$ means that $X=X_o$.  
Consequently, $X$ is the homogeneous Banach space (in the sense of \cite[Chap. I, Definition 2.10]{Kat76}) and the claim follows by 
\cite[Chap.~I, Theorem~2.11]{Kat76}. Part (b) is an immediate consequence of
part~(a) and the fact that $f* K_n\in\mathcal{P}$ if $f\in X\subset L^1$. Part~(c) follows from part~(a)  and the 
observation that $f*K_n\in\mathcal{P}_A$ if $f\in H[X]$.
\end{proof}

\begin{lemma}\label{le:pol-norm}
Let $X$ be a r.i. B.f.s.. 
Then 
\begin{equation}\label{eq:pol-norm-1}
\|f\|_X=\sup\{|\langle f,p\rangle|\colon p\in\mathcal{P},\ \|p\|_{X'}\le 1\}.
\end{equation}
\end{lemma}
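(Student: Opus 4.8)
The plan is to derive the polynomial formula \eqref{eq:pol-norm-1} from the associate-space duality formula \eqref{eq:pol-norm-0} by approximation, splitting into the separable and the nonseparable case. The inequality $\sup\{|\langle f,p\rangle| : p\in\mathcal P,\ \|p\|_{X'}\le 1\}\le \|f\|_X$ is immediate: every such $p$ lies in $X'$ with $\|p\|_{X'}\le 1$, so this is just a restriction of the supremum in \eqref{eq:pol-norm-0}. All the work is in the reverse inequality, i.e.\ showing that polynomials $p$ suffice to recover $\|f\|_X$.

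First I would treat the case $X'$ separable. Fix $f\in X$ with $\|f\|_X=1$ (the case $f=0$ is trivial) and let $\varepsilon>0$. By \eqref{eq:pol-norm-0} choose $g\in X'$ with $\|g\|_{X'}\le 1$ and $|\langle f,g\rangle|>1-\varepsilon$. Since $X'$ is separable, Lemma~\ref{le:dens}(a) applies to $X'$: the Fej\'er means $g*K_n$ converge to $g$ in $X'$-norm, and each $g*K_n\in\mathcal P$. For $n$ large, $\|g*K_n\|_{X'}\le 1+\varepsilon$ and, because $f\in X\subset L^1$ and $g*K_n$ is a bounded trigonometric polynomial, $\langle f, g*K_n\rangle\to\langle f,g\rangle$ (this convergence can also be seen by moving the convolution onto $f$, using $\langle f*K_n,g\rangle=\langle f,g*K_n\rangle$ and that $f*K_n\to f$ in $L^1$ while $g\in L^\infty\subset X'$ is not needed—$g*K_n\to g$ in $X'$ already gives it). Hence $|\langle f, (g*K_n)/\|g*K_n\|_{X'}\rangle|>(1-\varepsilon)/(1+\varepsilon)$ for $n$ large, and letting $\varepsilon\to0$ yields the claim.

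The remaining case is $X'$ nonseparable. Here the honest fix is to pass to the order-continuous part: I would use that $X'$ is always a B.f.s.\ with the Fatou property, so $(X')''\equiv X'$, and that $\|f\|_{X}=\|f\|_{X''}=\sup\{|\langle f,g\rangle|:g\in X',\ \|g\|_{X'}\le1\}$ still holds by \eqref{eq:pol-norm-0}. If $X\neq L^\infty$ then $L^\infty\subset X_o$, hence $L^\infty\subset (X')_o$ fails in general—so instead one approximates $g\in X'$ from below in the lattice sense by $g_k:=\min(|g|,k)\,\mathrm{sgn}\,\bar g\,(\ldots)$, which lie in $L^\infty\subset (X')_o$ and satisfy $\|g_k\|_{X'}\le\|g\|_{X'}$ and $\langle f,g_k\rangle\to\langle f,g\rangle$ by dominated convergence ($|f g_k|\le|fg|\in L^1$). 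Each $g_k$ is order continuous, so its Fej\'er means converge to it in $X'$-norm by Lemma~\ref{le:dens}(a) applied within the separable ideal generated by $g_k$ (or simply because order-continuous elements are approximable by their Fej\'er means, which is exactly what the proof of Lemma~\ref{le:dens}(a) gives). Combining the two approximations—first $g\to g_k$, then $g_k\to g_k*K_n\in\mathcal P$—and renormalizing as before gives polynomials realizing $\|f\|_X-\varepsilon$.

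The main obstacle is the nonseparable case: Lemma~\ref{le:dens}(a) is stated only for separable spaces, so one cannot directly Fej\'er-approximate a general $g\in X'$. The key point that unblocks it is that truncation $g\mapsto\min(|g|,k)\operatorname{sgn}(\bar g)$ lands in $L^\infty$, every $L^\infty$ function is an order-continuous element whenever $X'\neq L^\infty$ (and the case $X'=L^\infty$, i.e.\ $X=L^1$, is classical and can be handled directly), and Fej\'er means of $L^\infty$ functions converge in the norm of any r.i.\ space in which they are order-continuous—this is the content one extracts from \cite[Chap.~I, Theorem~2.11]{Kat76} together with the ideal property. Once this truncation-then-smoothing scheme is in place, the renormalization argument is routine and identical to the separable case.
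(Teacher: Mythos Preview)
Your argument is essentially correct, but the paper's proof takes a cleaner route that avoids both the separable/nonseparable case split and the renormalization step.

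The paper proceeds as follows. First, it reduces the supremum in \eqref{eq:pol-norm-0} to \emph{simple} functions $g\in X'$: for any $g\in X'$ one picks simple $g_n$ with $|g_n|\le|g|$ and $g_n\to g$ a.e., so $\langle f,g_n\rangle\to\langle f,g\rangle$ by dominated convergence while $\|g_n\|_{X'}\le\|g\|_{X'}$. Second, since $X'$ is r.i.\ with the Fatou property, it is an exact interpolation space between $L^1$ and $L^\infty$, which yields the convolution bound $\|g*K_n\|_{X'}\le\|K_n\|_{L^1}\|g\|_{X'}=\|g\|_{X'}$ for \emph{every} $g\in X'$. Third, for a simple $g$ one has $|g*K_n|\le\|g\|_{L^\infty}\chi_\T$ and $g*K_n\to g$ a.e., so dominated convergence again gives $\langle f,g*K_n\rangle\to\langle f,g\rangle$. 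Thus the polynomials $g*K_n$ already sit in the unit ball of $X'$ and realize the supremum---no renormalization, no separability hypothesis on $X'$, no separate treatment of $X'=L^\infty$.

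The contrast with your approach: you rely on \emph{norm} convergence $g*K_n\to g$ in $X'$, which forces you to first pass to order-continuous elements (hence the truncation and the case split), and then to renormalize because you only know $\|g*K_n\|_{X'}\le\|g\|_{X'}+\varepsilon$. The paper instead uses only \emph{a.e.}\ convergence of $g*K_n$ together with a uniform $L^\infty$ bound (available once $g$ is simple), and the interpolation inequality handles the norm constraint exactly. Your route works and is perhaps more intuitive, but the paper's is shorter and uniform across all r.i.\ $X$; the interpolation bound $\|g*K_n\|_{X'}\le\|g\|_{X'}$ is the single observation that eliminates both complications in your argument.
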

\proof
We know by (\ref{eq:pol-norm-0}) that
\begin{equation}\label{eq:pol-norm-2}
\|f\|_X=\sup\{|\langle f,g\rangle|\colon g\in X',\ \|g\|_{X'}\le 1\}.
\end{equation}
First of all notice that in the above supremum we may restrict to simple functions from $X'$, i.e. 
\begin{equation}\label{eq:pol-norm-3}
\|f\|_X=\sup\{|\langle f,g\rangle|\colon g {\rm \ is\ simple\ function\ and\ } \|g\|_{X'}\le 1\}.
\end{equation}
In fact,  for each $g\in X'$ there is a sequence of simple functions $(g_n)$ such that $|g_n|\leq |g|$ and $g_n\to g$ a.e.. Then  the Lebesgue dominated convergence theorem implies that $\langle f,g_n\rangle\to \langle f,g\rangle$. In particular, if $\|g\|_{X'}\le 1$ then also $\|g_n\|_{X'}\le 1$.

Since $X'$ is r.i. and enjoys the Fatou property, it is an exact interpolation space  between $L^1$ and $L^{\infty}$ (see \cite[Theorem 4.9, p. 105]{KPS82}). In consequence, for each $g\in X'$
\begin{equation}\label{eq:pol-norm-4}
\|g*K_n\|_{X'}\le\|K_n\|_{L^1}\|g\|_{X'}\le\|g\|_{X'},
\quad
n\in\N,
\end{equation}
where $(K_n)$ is the Fej\'er kernel. Moreover, $g*K_n\to g$ a.e. (in fact in each Lebesgue point of $g$, since Fej\'er kernel is approximative unity). However, if we choose $g$ to be simple function then also $|g*K_n|\leq \|g\|_{\infty}\chi_{\T}$. Therefore, using once again the Lebesgue dominated convergence theorem we conclude that $\langle f,g*K_n\rangle\to \langle f,g\rangle$ for each $f\in X$ and each simple function $g\in X'$. Together with (\ref{eq:pol-norm-4}) it proves our claim. \endproof

The idea of the proof of lemma below is analogously as for $H^p$ spaces in \cite{Du70}. We believe it is known, but cannot find any reference. Moreover, it was proved in \cite{K04} with additional assumption, that $X$ is reflexive. To avoid the impresion that this assumption is necessary, we present a short proof. 

\begin{lemma}\label{duality}
Let $X$ be separable r.i. B.f.s. with nontrivial Boyd 
indices. Then $H[X]^*$ is isomorphic with $H[X']$, i.e. $H[X]^*\simeq H[X']$. Moreover, each functional $G\in H[X]^*$  is of the form
$$
G(f)=\langle f,g \rangle=\int_{\T} f(t)\bar g(t)dm(t),
$$
for some unique $g\in H[X']$. Moreover, for such $G$ there holds 
\[
\|G\|_{H[X]^*}\leq \|g\|_{H[X']}\leq \|P\|_{X'\to X'}\|G\|_{H[X]^*}.
\]
\end{lemma}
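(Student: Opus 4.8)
The plan is to identify $H[X]^*$ with a quotient of $X^*$ and then use the boundedness of the Riesz projection to realize that quotient concretely inside $H[X']$. First I would observe that since $X$ is separable, $X^* \simeq X'$ via the pairing $g \mapsto \langle \cdot, g\rangle$ (this is the standard duality for order continuous r.i. B.f.\ spaces, and $X = X_o$ here). Since $H[X]$ is a closed subspace of $X$, the Hahn--Banach theorem gives that every $G \in H[X]^*$ extends to some functional on $X$, hence is represented by $\langle \cdot, g\rangle$ for some $g \in X'$ with $\|g\|_{X'} = \|G\|_{H[X]^*}$ (taking a norm-preserving extension). So far $g$ is only determined modulo $H[X]^\perp = \{g \in X' : \langle f, g\rangle = 0 \text{ for all } f \in H[X]\}$.

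Next I would pin down this annihilator. Using density of $\mathcal{P}_A$ in $H[X]$ (Lemma~\ref{le:dens}(c)) together with the fact that testing against polynomials recovers the $X'$-norm (Lemma~\ref{le:pol-norm}), the condition $\langle \chi_j, g\rangle = 0$ for all $j \ge 0$ means exactly $\widehat{g}(n) = 0$ for all $n \ge 0$, i.e.\ $H[X]^\perp = H_-[X']$ in the notation of the paper (functions whose nonnegative Fourier coefficients all vanish; equivalently $\overline{g} \in \chi_1 H[X']$ up to conjugation conventions). Thus $H[X]^* \simeq X'/H_-[X']$ isometrically. The remaining task is to choose a canonical representative in each coset lying in $H[X']$, and to control its norm.

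The natural candidate is $P g$: for any representative $g \in X'$ of the coset, $Pg \in H[X']$ (here is where nontrivial Boyd indices of $X'$ enter, guaranteeing $P$ is bounded on $X'$ — note $\alpha_X, \beta_X \in (0,1)$ forces $\alpha_{X'}, \beta_{X'} \in (0,1)$ by \eqref{indeksysuma}), and since $g - Pg$ has only strictly negative Fourier coefficients, $g - Pg \in H_-[X']$, so $Pg$ represents the same coset, i.e.\ $G(f) = \langle f, Pg\rangle$ for all $f \in H[X]$. For uniqueness: if $g_1, g_2 \in H[X']$ both represent $G$, then $g_1 - g_2 \in H[X'] \cap H_-[X'] = \{0\}$ since a function in $H[X']$ has vanishing negative coefficients while one in $H_-[X']$ has vanishing nonnegative coefficients. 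For the norm bounds: the lower bound $\|G\|_{H[X]^*} \le \|g\|_{H[X']}$ is immediate from $|G(f)| = |\langle f, g\rangle| \le \|f\|_X \|g\|_{X'}$; for the upper bound, start from the norm-preserving Hahn--Banach extension with representative $g_0 \in X'$, $\|g_0\|_{X'} = \|G\|_{H[X]^*}$, set $g = Pg_0 \in H[X']$, and estimate $\|g\|_{H[X']} = \|Pg_0\|_{X'} \le \|P\|_{X' \to X'}\|g_0\|_{X'} = \|P\|_{X' \to X'}\|G\|_{H[X]^*}$.

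I expect the main obstacle to be bookkeeping rather than conceptual: making sure the duality $X^* \simeq X'$ is invoked with the correct hypotheses (separability, which the lemma assumes, plus the fact that for r.i.\ B.f.\ spaces $X \ne L^\infty$ one has $L^\infty \subset X_o$, so the associate space genuinely norms $X$), and being careful that the conjugate-linear pairing $\langle \cdot, \cdot\rangle$ and the conventions for $H_-[X]$ match up so that the annihilator is identified correctly. The one place where an actual analytic input is needed — as opposed to soft functional analysis — is the boundedness of $P$ on $X'$, which is exactly what the nontrivial-Boyd-indices assumption buys us and which is cited earlier in the excerpt. A minor point worth stating carefully is why density of $\mathcal{P}_A$ in $H[X]$ suffices to conclude $\widehat{g}(n)=0$ for $n \ge 0$ from $G = 0$: this is just continuity of $f \mapsto \langle f, g\rangle$ on $X$ together with $\langle \chi_j, g\rangle = \overline{\widehat{g}(j)}$.
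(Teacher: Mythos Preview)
Your proposal is correct and follows essentially the same route as the paper: identify $H[X]^*$ with $X'/H[X]^\perp$ via separability and Hahn--Banach, show $H[X]^\perp = H_-[X']$ by testing against $\chi_n$, and then use boundedness of $P$ on $X'$ (coming from nontrivial Boyd indices and \eqref{indeksysuma}) to realize the quotient as $H[X']$. The paper defers the norm inequalities to \cite{Du70}, whereas you spell them out via a norm-preserving extension $g_0$ and $g = Pg_0$; this is exactly the argument behind that reference.
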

\proof
Once we know that $H[X]^*\simeq X^*/H[X]^{\perp}$ (since $H[X]$ is closed subspace of $X$)  and $X^*\simeq X'$ (by separability of $X$ and \cite[p. 29]{LT79}), it is enough to prove that $H[X]^{\perp}\simeq H_-[X']$. In fact, since $X'$ has nontrivial Boyd indices when $X$ has, it follows that $P$ is bounded on $X'$, $P(X')=H[X']$ and $H_-[X']$ is complement of $H[X']$ in $X'$. 

Since $X^*\simeq X'$ we may regard elements of  $H[X]^{\perp}$ as functions in $X'$. Let $f\in  H[X]^{\perp}$. Then
\[
\langle \chi_n,f\rangle =0\ {\rm for\ each\ }n\geq 0,
\]
since $\chi_n\in H[X]$. But it means that $f\in H_-[X']$. For the opposite inclusion let $g\in  H_-[X']$. Then for each polynomial $p=\sum_{k=1}^np_k\chi_k\in H[X]$ there holds 
\[
\langle p,g\rangle =\sum_{k=1}^np_k\langle \chi_k,g\rangle=0
\]
and, in view of density of analytic polynomials in $H[X]$, we conclude that $g\in  H[X]^{\perp}$. The remaining inequalities for norms may be explained exactly as in \cite[Section 7.2]{Du70}.
\endproof

\section{Toeplitz operators}

\begin{lemma}\label{le:multiplication}
Let $X,Y$ be r.i. B.f.s. and suppose $X$ is separable.
If a linear operator $A:X\to Y$ is bounded and there exists a sequence $(a_n)_{n\in\N}$ of
complex numbers such that
\begin{equation}\label{eq:multiplication-1}
\langle A\chi_j,\chi_k\rangle=a_{k-j}
\quad\text{for all}\quad
j,k\in\Z,
\end{equation}
then there exists a function $a\in M(X,Y)$ such that $A=M_a$ and \
$\widehat{a}(n)=a_n$ for all $n\in\Z$.
\end{lemma}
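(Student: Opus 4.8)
The plan is to show that the matrix condition \eqref{eq:multiplication-1} forces $A$ to be a multiplication operator, and then identify the multiplier. First I would single out a natural candidate for the symbol: since $1 = \chi_0 \in L^\infty \subset X$, set $a := A\chi_0 \in Y$. The immediate goal is to prove $\widehat{a}(n) = a_n$ for all $n \in \Z$. This follows by testing $a$ against $\chi_n$: for every $n \in \Z$ we have $\widehat a(n) = \langle A\chi_0, \chi_n\rangle = a_{n-0} = a_n$ by \eqref{eq:multiplication-1} with $j = 0$, $k = n$. So $a \in Y \subset L^1$ has the prescribed Fourier coefficients.

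Next I would verify that $A$ agrees with the multiplication operator $M_a\colon f \mapsto af$ on the dense set $\mathcal{P}$ of trigonometric polynomials (dense in $X$ by Lemma \ref{le:dens}(b), using separability of $X$). It suffices to check $A\chi_j = \chi_j a$ for each $j \in \Z$, and since both sides lie in $Y \subset L^1$ it is enough to compare their Fourier coefficients. On one hand, $\langle A\chi_j, \chi_k\rangle = a_{k-j}$ by hypothesis. On the other hand, $\widehat{(\chi_j a)}(k) = \langle \chi_j a, \chi_k \rangle = \langle a, \chi_{k-j}\rangle = \widehat a(k-j) = a_{k-j}$. Hence $A\chi_j$ and $\chi_j a$ have the same Fourier coefficients, so they are equal as $L^1$-functions, and by linearity $A p = ap$ for all $p \in \mathcal{P}$.

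The remaining point is to promote this identity from $\mathcal{P}$ to all of $X$ and to conclude $a \in M(X,Y)$. Here is where a little care is needed, so I expect this to be the main (mild) obstacle: $A$ is bounded $X \to Y$, but the naive multiplication map need not obviously be continuous until we know $a \in M(X,Y)$. I would argue as follows: for $f \in X$, take polynomials $p_n \to f$ in $X$ (Lemma \ref{le:dens}(b)); then $Ap_n \to Af$ in $Y$, hence in $L^1$, while $p_n a = A p_n$ already, and passing to a subsequence we may assume $p_n \to f$ a.e.\ as well, so along a further subsequence $p_n a = A p_n \to (Af)$ in $L^1$ forces, after extracting an a.e.-convergent subsequence, $fa = Af$ a.e. In particular $fa \in Y$ for every $f \in X$, which is exactly the statement that $a \in M(X,Y)$, and $\|M_a\|_{X\to Y} = \|A\|_{X \to Y} < \infty$; moreover the closed graph theorem (or simply $\|af\|_Y = \|Af\|_Y \le \|A\|\,\|f\|_X$) gives the norm bound directly. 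This establishes $A = M_a$ with $a \in M(X,Y)$ and $\widehat a(n) = a_n$, completing the proof.
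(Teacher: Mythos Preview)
Your proof is correct and follows essentially the same approach as the paper: define $a := A\chi_0$, verify $\widehat a(n)=a_n$, check $Af=af$ on trigonometric polynomials by comparing Fourier coefficients, and then extend to all of $X$ by density. The only difference is that you spell out the density-plus-a.e.-subsequence argument for the extension step, whereas the paper simply asserts ``In consequence $Af=af$ for all $f\in X$''; your added detail is a legitimate way to justify that line.
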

\begin{proof}
Put $a:=A\chi_0\in Y$. Since $Y\subset L^1$, we infer from
\eqref{eq:multiplication-1} that
\[
\widehat{a}(n)
=
\langle a,\chi_n\rangle 
=
\langle A\chi_0,\chi_n\rangle 
=
a_n,
\quad
n\in\Z.
\]
If $f=\sum_{k=-m}^m \widehat{f}(k)\chi_k\in\mathcal{P}$, then
$af\in Y\subset L^1$ and the $j$-th Fourier coefficient of $af$ is
\begin{equation}\label{eq:multiplication-2}
(af)\widehat{\hspace{2mm}}(j)
=
\sum_{k\in\Z}\widehat{a}(j-k)\widehat{f}(k)
=
\sum_{k=-m}^m a_{j-k}\widehat{f}(k).
\end{equation}
On the other hand, from \eqref{eq:multiplication-1} we get
for $j\in\Z$,
\begin{equation}\label{eq:multiplication-3}
(Af)\widehat{\hspace{2mm}}(j)
=
\langle Af,\chi_j\rangle
=
\sum_{k=-m}^m \widehat{f}(k)\langle A\chi_k,\chi_j\rangle
=
\sum_{k=-m}^m a_{j-k}\widehat{f}(k).
\end{equation}
By \eqref{eq:multiplication-2} and \eqref{eq:multiplication-3}, 
$(af)\widehat{\hspace{2mm}}(j)=(Af)\widehat{\hspace{2mm}}(j)$ for all
$j\in\Z$. Therefore, $Af=af$ for all $f\in\mathcal{P}$
in view of the uniqueness of the Fourier series. 
Since the space $X$ is separable, the set
$\mathcal{P}$ is dense in $X$ by Lemma~\ref{le:dens}.
In consequence $Af=af$ for all $f\in X$. This means that $A=M_a$
and $a\in M(X,Y)$ by the definition of $M(X,Y)$.
\end{proof}




\begin{theorem}[General Brown--Halmos theorem]\label{Tw-BH}
Let $X,Y$ be two separable r.i. B.f.  spaces, such that 
$X\subset Y$, $Y$ has nontrivial Boyd indices and the Fatou property. A continuous linear operator $A:H[X]\to H[Y]$ satisfies
\begin{equation}\label{Toep1}
\langle A\chi_j,\chi_k\rangle=a_{k-j}
\end{equation}
for some sequence 
$(a_k)_{k\in \mathbb{Z}}$ and all $j,k\geq 0$ if and only if there exists $a\in M(X,Y)$ such that  $A=T_a$ and  $\widehat{a}(n)=a_n$ for all $n\in\Z$.  Moreover,
\begin{equation}\label{Toep1norm}
\|a\|_{M(X,Y)}\leq \|T_a\|_{H[X]\to H[Y]}\leq \|P\|_{Y\to Y}\|a\|_{M(X,Y)}.
\end{equation}
\end{theorem}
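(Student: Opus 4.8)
The plan is to prove the two implications separately, extracting the candidate symbol from the action of $A$ on the constant function $\chi_0$ and then identifying $A$ with $T_a$ on the dense set $\mathcal P_A$.

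First, the easy direction: suppose $a\in M(X,Y)$ and $A=T_a$. Then for $j,k\ge 0$ one computes $\langle T_a\chi_j,\chi_k\rangle=\langle P(a\chi_j),\chi_k\rangle=\langle a\chi_j,\chi_k\rangle=\widehat{a}(k-j)$, since $k\ge 0$ so $\chi_k\in H[Y]$ and the Riesz projection may be dropped inside the pairing. Thus \eqref{Toep1} holds with $a_n:=\widehat a(n)$. For the norm estimate, boundedness of $T_a=P\circ M_a$ from $H[X]$ to $H[Y]$ gives $\|T_a\|\le\|P\|_{Y\to Y}\|M_a\|_{X\to Y}=\|P\|_{Y\to Y}\|a\|_{M(X,Y)}$; here I use that $M_a$ maps $X$ into $Y$ with norm $\|a\|_{M(X,Y)}$ and that $P$ is bounded on $Y$ by the nontrivial-Boyd-indices hypothesis. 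For the lower bound $\|a\|_{M(X,Y)}\le\|T_a\|$, I would argue that for an analytic polynomial $f\in\mathcal P_A$ one has $af=T_af+(I-P)(af)$, and then test against analytic polynomials $p$: since $\langle af,p\rangle=\langle T_af,p\rangle$ whenever $p\in\mathcal P_A$, Lemma~\ref{le:pol-norm} applied in $Y$ (restricting the supremum further to analytic polynomials is justified because, for an analytic $g=af$... ) — actually the cleanest route is: $M(X,Y)$ is r.i. and by Lemma~\ref{order multipliers}-type reasoning its norm is computed on a dense set; more concretely, $\|a\|_{M(X,Y)}=\sup_{\|f\|_X\le 1}\|af\|_Y$ and by separability of $X$ this supremum is attained over $f\in\mathcal P\cap\{$analytic part$\}$, but $a\in\overline{H[?]}$... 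This is where the argument needs care, so see below.

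For the nontrivial direction, assume \eqref{Toep1}. Set $a:=A\chi_0\in H[Y]\subset Y$; then as in Lemma~\ref{le:multiplication} one gets $\widehat a(n)=\langle A\chi_0,\chi_n\rangle=a_n$ for all $n\ge 0$ (and $\widehat a(n)=0$ for $n<0$ since $a\in H[Y]$, which forces $a_n=0$ there — consistent with the matrix being lower-triangular-only-constrained-for $k-j$ with $j,k\ge0$). For $f=\sum_{k=0}^m\widehat f(k)\chi_k\in\mathcal P_A$, compute the Fourier coefficients of both $T_af=P(af)$ and $Af$: on one hand $(Af)\widehat{\ }(j)=\sum_{k=0}^m\widehat f(k)\langle A\chi_k,\chi_j\rangle=\sum_{k=0}^m a_{j-k}\widehat f(k)$ for $j\ge 0$ by \eqref{Toep1} (and $Af\in H[Y]$ so its negative coefficients vanish); on the other hand $(af)\widehat{\ }(j)=\sum_{k=0}^m\widehat a(j-k)\widehat f(k)$, so $(P(af))\widehat{\ }(j)=\sum_{k=0}^m a_{j-k}\widehat f(k)$ for $j\ge 0$, using $\widehat a(j-k)=a_{j-k}$ when $j-k\ge 0$ and $\widehat a(j-k)=0$ otherwise. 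Hence $Af$ and $T_af$ have identical Fourier coefficients, so $Af=T_af$ on $\mathcal P_A$. Since $X$ is separable, $\mathcal P_A$ is dense in $H[X]$ by Lemma~\ref{le:dens}(c), and both $A$ and $T_a$ are bounded $H[X]\to H[Y]$, so $A=T_a$ on all of $H[X]$. It remains to upgrade $a\in Y$ to $a\in M(X,Y)$ with the stated norm bound.

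The main obstacle, and the place I would spend the most effort, is showing $a\in M(X,Y)$ together with $\|a\|_{M(X,Y)}\le\|A\|$. The idea: for every analytic polynomial $f$, $af=T_af+(I-P)(af)$, and I want to bound $\|af\|_Y$ by $\|A\|\,\|f\|_X$ — but $(I-P)(af)$ need not be controlled. The fix is duality: by Lemma~\ref{le:pol-norm}, $\|af\|_Y=\sup\{|\langle af,p\rangle|:p\in\mathcal P,\ \|p\|_{Y'}\le 1\}$; split $p=P p+(I-P)p$, and note $\langle af,(I-P)p\rangle$ need not vanish. So instead I would first prove $a\in M(X,Y)$ by a boundedness/closed-graph argument: the operator $f\mapsto T_af=Af$ is bounded $\mathcal P_A\to Y$; to get the full multiplier $M_a$, use that $M(X,Y)$ is r.i. (\cite[Theorem~2.2]{KLM13}) so its norm only sees $|a|$, and combine with $a\in H[Y]$ plus the observation that for analytic $f$ the ``bad'' part $(I-P)(af)$ is $\overline{H_0}$-valued while $af$ and $T_af$ have the same analytic part; an argument via conjugation (replacing $f$ by suitable $\bar g$ or by rotating) or via the fact that $Y$ has the Fatou property (so $Y\equiv Y''$ and one may test against $Y'$) should close the gap. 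Concretely, I expect to invoke the Fatou property of $Y$ to write $\|af\|_Y=\sup\{|\langle af,g\rangle|:g\in Y',\|g\|_{Y'}\le 1\}$ and then, for the supremum, reduce to $g$ analytic by a Fejér-mean approximation as in Lemma~\ref{le:pol-norm}, at which point $\langle af,g\rangle=\langle P(af),g\rangle=\langle Af,g\rangle$ and the bound $|\langle Af,g\rangle|\le\|A\|\,\|f\|_X\,\|g\|_{Y'}$ finishes it. This reduction — from general test functions to analytic ones in the dual pairing — is the crux, and it is exactly where the Fatou property and nontrivial Boyd indices of $Y$ (hence boundedness of $P$ on $Y'$) are used.
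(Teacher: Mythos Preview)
Your choice $a:=A\chi_0$ is the fatal gap. Since $A\chi_0\in H[Y]$, you only recover $\widehat a(n)=a_n$ for $n\ge 0$, and you then assert that ``$\widehat a(n)=0$ for $n<0$ \dots\ forces $a_n=0$ there''. This is false: the condition \eqref{Toep1} determines $a_{-m}=\langle A\chi_m,\chi_0\rangle$ for every $m\ge 1$, and these need not vanish. Take $A=T_{\chi_{-1}}$: then $A\chi_0=P(\chi_{-1})=0$, so your $a=0$ and your $T_a$ is the zero operator, yet $A\chi_1=\chi_0\ne 0$. Your Fourier-coefficient computation for $Af=T_af$ silently uses the assumption $a_n=0$ for $n<0$ when you match the two sums, so it is circular.

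The paper's proof avoids this by \emph{shifting}: one sets $b_n:=\chi_{-n}A\chi_n\in Y$, notes $\|b_n\|_Y\le\|A\|$, and uses the Fatou property of $Y$ to realize $Y$ as the dual of the separable space $(Y')_o$, so that Banach--Alaoglu yields a weak* limit $a$ of a subsequence $(b_{n_k})$. Then $\langle a,\chi_j\rangle=\lim_k\langle A\chi_{n_k},\chi_{n_k+j}\rangle=a_j$ for \emph{every} $j\in\Z$, capturing the full two-sided sequence. The multiplier bound $\|a\|_{M(X,Y)}\le\|A\|$ also comes from this shift: for polynomials $f,g$ and $n$ large, $\langle af,g\rangle=\langle\chi_{-n}A(\chi_nf),g\rangle$, and $\|\chi_nf\|_{H[X]}=\|f\|_X$, so one bounds $|\langle af,g\rangle|\le\|A\|\,\|f\|_X\|g\|_{Y'}$ and concludes via Lemma~\ref{le:pol-norm} and Lemma~\ref{le:multiplication}. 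Your proposed route through ``reduce to analytic test functions $g$'' cannot work for the same reason: when both $f$ and $g$ are analytic, $\langle af,g\rangle=\langle P(af),g\rangle$ only sees the nonnegative Fourier coefficients of $a$, so it cannot recover $\|af\|_Y$ if $a$ has nonzero negative coefficients.
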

\proof
Of course, we need to prove only necessity. 
For $n\geq 0$ put
\[
b_n=\chi_{-n}A\chi_n.
\]
Then $b_n\in Y$ and $\|b_n\|_Y\leq \|A\|_{H[X]\to H[Y]}$. Notice that under 
our assumptions on $Y$, $(Y')_o\not = \{0\}$ and $[(Y')_o]^*=[(Y')_o]'\equiv Y''\equiv Y$ by (\ref{drugi dual}), which means 
that $Y$ is a dual of separable space. In consequence, 
relative weakly* topology of $B(Y)$ is metrizable (see for example  \cite[Corollary 2.6.20]{Meg98} p. 231). 
Thus the Banach-Alaoglu theorem implies that there 
is $a\in Y$, $\|a\|_Y\leq \|A\|_{H[X]\to H[Y]}$ and a sequence $(n_k)$ such that 
$b_{n_k}\to a$  weakly*. In particular, for each $j\in \mathbb{Z}$
\[
\langle b_{n_k},\chi_j\rangle \to \langle a,\chi_j\rangle.
\]
On the other hand,
\[
\langle b_{n_k},\chi_j\rangle=\langle A\chi_{n_k},\chi_{n_k+j}\rangle=a_j,
\]
when $n_k+j\geq 0$. This means that for each $j\in \mathbb{Z}$
\[
\langle a,\chi_j\rangle=a_j.
\]
Consider $B\colon X\to Y$ given by $B\colon f\mapsto af$. Then we have 
\[
\langle Bf,g \rangle=\langle \chi_{-n}A(\chi_nf),g\rangle
\]
for polynomials $f,g\in \mathcal{P}$ and $n>\max\{\deg f,\deg g\}$. Also for these $n$'s there holds 
\[
\|A(\chi_nf)\|_Y
\leq \|A\|_{H[X]\to H[Y]}\|\chi_n f\|_{H[X]}
= 
\|A\|_{H[X]\to H[Y]}\|f\|_{X}.
\]
Thus
\[
|\langle \chi_{-n}A(\chi_nf),g\rangle|\leq \|A\|_{H[X]\to H[Y]}\|f\|_{X}\|g\|_{Y'}
\] 
and
\[
|\langle Bf,g \rangle|\leq 
\limsup_{n\to \infty}|\langle \chi_{-n}A(\chi_nf),g\rangle|
\leq 
\|A\|_{H[X]\to H[Y]}\|f\|_{X}\|g\|_{Y'}.
\]
Taking supremum over $\|f\|_{X}\leq 1,\|g\|_{Y'}\leq 1$, $f,g\in\mathcal{P}$, 
by density of $\mathcal{P}$ in $X$ and by Lemma \ref{le:pol-norm} we conclude 
\[
\|B\|_{X\to Y}\leq \|A\|_{H[X]\to H[Y]}.
\]
Furthermore, for $k,j\in \mathbb{Z}$
\[
\langle B\chi_j,\chi_k \rangle=\langle a,\chi_{k-j} \rangle=a_{k-j}.
\]
Consequently, Lemma \ref{le:multiplication} implies that $a\in M(X,Y)$. On the other hand
\[
\langle T_a\chi_j,\chi_k \rangle=a_{k-j}=\langle A\chi_j,\chi_k\rangle
\]
for $j,k\geq 0$. Moreover, $A\chi_j, T_a\chi_j\in H[Y]\subset H^1$, thus 
\[
A\chi_j=T_a\chi_j
\]
for each $j\geq 0$, by uniqueness of Fourier series. Finally, since $\mathcal{P}_A$ is 
dense in $H[X]$, we conclude that $T_a=A$ and 
\[
\|a\|_{M(X,Y)}
=
\|B\|_{X\to Y}
\leq \|A\|_{H[X]\to H[Y]}=\|T_a\|_{H[X]\to H[Y]},
\] 
as claimed.
\endproof

Indeed, we can slightly relax assumptions from the previous theorem, allowing 
$X$ to be nonseparable. However, then the condition (\ref{Toep1}) no more 
determines an operator, so  Theorem \ref{Tw-BH} rather reads as follows.

\begin{theorem}\label{BHnonsep}
Let $X,Y$ be r.i. B.f. spaces, such that $X\subset Y$,  
$Y$ has nontrivial Boyd indices and the Fatou property. Then the Toeplitz 
operator $T_a:f\mapsto P(af)$ is bounded from $H[X]$ to $H[Y]$ if and only if $a\in M(X,Y)$ and then
\[
\|a\|_{M(X,Y)}\leq \|T_a\|_{H[X]\to H[Y]}\leq \|P\|_{Y\to Y}\|a\|_{M(X,Y)}.
\]
\end{theorem}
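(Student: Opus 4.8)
The plan is to dispose of sufficiency by a one-line estimate and to prove necessity by reusing the Banach--Alaoglu argument from the proof of Theorem~\ref{Tw-BH}, the point being that the \emph{only} place separability of $X$ entered there was the density of $\mathcal{P}_A$ in $H[X]$, which I shall replace by a Fej\'er-mean approximation that exploits the Fatou property of $Y$. \emph{Sufficiency.} If $a\in M(X,Y)$ then $af\in Y\subset L^1$ for every $f\in H[X]\subset X$, so $T_af=P(af)$ is meaningful; since $Y$ has nontrivial Boyd indices $P$ is bounded on $Y$, hence $P(af)\in Y$, and as $\widehat{P(af)}(n)=0$ for $n<0$ we get $T_af\in H[Y]$ with
\[
\|T_af\|_Y\le\|P\|_{Y\to Y}\|af\|_Y\le\|P\|_{Y\to Y}\|a\|_{M(X,Y)}\|f\|_X,
\]
which is boundedness together with the upper bound for $\|T_a\|_{H[X]\to H[Y]}$.

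\emph{Necessity, recovering $a$ in $Y$.} Assume $T_a\colon H[X]\to H[Y]$ is bounded. Testing on $\chi_0$ forces $a=a\chi_0\in L^1$, so $a_n:=\widehat a(n)$ is defined and $\langle T_a\chi_j,\chi_k\rangle=a_{k-j}$ for $j,k\ge0$. As in the proof of Theorem~\ref{Tw-BH}, put $b_n:=\chi_{-n}T_a\chi_n$; since $|b_n|=|T_a\chi_n|$ and $\|\chi_n\|_{H[X]}=\|\chi_0\|_X$, the sequence $(b_n)$ is bounded in $Y$. Because $Y$ has nontrivial Boyd indices and the Fatou property, \eqref{drugi dual} gives $Y\equiv[(Y')_o]^*$ with $(Y')_o$ separable, so the closed balls of $Y$ are weak$^*$ sequentially compact; choosing $b_{n_k}\to a'$ weak$^*$ with $\|a'\|_Y\le\|\chi_0\|_X\|T_a\|_{H[X]\to H[Y]}$ and using $\langle b_{n_k},\chi_j\rangle=\widehat{T_a\chi_{n_k}}(n_k+j)=a_j$ whenever $n_k+j\ge0$, we obtain $\widehat{a'}(j)=a_j$ for all $j\in\Z$, so $a'=a$ a.e.; in particular $a\in Y$.

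\emph{Necessity, the multiplier estimate.} For $f\in\mathcal{P}$ and $N>\deg f$ we have $\chi_Nf\in\mathcal{P}_A\subset H[X]$ with $\|\chi_Nf\|_X=\|f\|_X$, and for $g\in\mathcal{P}$ with $\deg g<N$ a Fourier-support count gives $\langle af,g\rangle=\langle\chi_{-N}T_a(\chi_Nf),g\rangle$, the discrepancy $\chi_{-N}(I-P)(a\chi_Nf)$ being supported on frequencies below $-N$. Hence $|\langle af,g\rangle|\le\|T_a(\chi_Nf)\|_Y\|g\|_{Y'}\le\|T_a\|_{H[X]\to H[Y]}\|f\|_X\|g\|_{Y'}$, and since $af\in Y$, Lemma~\ref{le:pol-norm} applied in $Y$ yields $\|af\|_Y\le\|T_a\|_{H[X]\to H[Y]}\|f\|_X$ for all $f\in\mathcal{P}$. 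For an arbitrary $f\in X$ put $f_n:=f*K_n\in\mathcal{P}$; Fej\'er means are contractive on any r.i.\ B.f.\ space (Minkowski's integral inequality and translation invariance of the r.i.\ norm), so $\|f_n\|_X\le\|f\|_X$, while $f_n\to f$ a.e. Then $af_n\in Y$, $\sup_n\|af_n\|_Y\le\|T_a\|_{H[X]\to H[Y]}\|f\|_X$ and $af_n\to af$ a.e., so the Fatou property of $Y$ forces $af\in Y$ with $\|af\|_Y\le\|T_a\|_{H[X]\to H[Y]}\|f\|_X$. Therefore $a\in M(X,Y)$ and $\|a\|_{M(X,Y)}\le\|T_a\|_{H[X]\to H[Y]}$, which is the lower bound.

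I expect the main obstacle to be exactly this last passage from trigonometric polynomials to all of $X$ --- the step for which Theorem~\ref{Tw-BH} needed separability; handling it by a Fej\'er approximation controlled simultaneously a.e.\ and in $X$-norm and then appealing to the Fatou property of $Y$ is the only genuinely new ingredient, and it explains why the Fatou property (rather than $Y$ being merely a B.f.s.) is assumed. One could alternatively recover $a\in M(X_o,Y)$ by applying Theorem~\ref{Tw-BH} to the restriction $T_a|_{H[X_o]}$ and then invoke $M(X_o,Y)\equiv M(X,Y)$ from Lemma~\ref{order multipliers}, but this route additionally requires $X\neq L^\infty$ and separability of $Y$, so the argument above is the robust one.
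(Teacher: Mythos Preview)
Your argument is correct and takes a genuinely different route from the paper. The paper proceeds by a reduction to Theorem~\ref{Tw-BH}: for $X\ne L^\infty$ it restricts $T_a$ to $H[X_o]$ (which is separable), applies Theorem~\ref{Tw-BH} to obtain $a\in M(X_o,Y)$, and then invokes Lemma~\ref{order multipliers} to identify $M(X_o,Y)\equiv M(X,Y)$; the case $X=L^\infty$ (where $X_o=\{0\}$) is handled separately via the disc algebra. You instead redo the Banach--Alaoglu/duality argument from scratch, replacing the one step that used density of $\mathcal P$ in $X$ by the Fej\'er approximation $f_n=f*K_n$: contractivity $\|f_n\|_X\le\|f\|_X$ (valid on any r.i.\ B.f.s.\ by translation invariance and Minkowski's integral inequality, the latter available via \eqref{eq:pol-norm-0} since $X$ has the semi-Fatou property), a.e.\ convergence, and the Fatou property of $Y$ combine to push the polynomial estimate $\|af\|_Y\le\|T_a\|\,\|f\|_X$ to all of $X$. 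This gives a uniform proof with no case split, and it makes transparent exactly where the Fatou property of $Y$ enters; the paper's route is shorter only because it recycles Theorem~\ref{Tw-BH} and Lemma~\ref{order multipliers} as black boxes.

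One small correction to your closing remark: the alternative route via $T_a|_{H[X_o]}$ does \emph{not} actually require separability of $Y$. Although Theorem~\ref{Tw-BH} is stated with both $X$ and $Y$ separable, an inspection of its proof shows that separability of $Y$ is never used (only separability of $X$, nontrivial Boyd indices of $Y$, and the Fatou property of $Y$ are needed), and indeed the paper applies it in exactly this way. The genuine obstruction to that route is only the case $X=L^\infty$, which you correctly identify and which your Fej\'er argument handles without fuss.
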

\proof
Suppose first $X\not= L^{\infty}$. Then  
$T_a:H[X_o]\to H[Y]$ and has Toeplitz matrix representation, i.e. satisfies  (\ref{Toep1}). Then applying 
Theorem \ref{Tw-BH} (we can, since $X_o$ is separable) we conclude that  $a\in M(X_o,Y)\equiv M(X,Y)$ (see Lemma \ref{order multipliers}). Moreover, respective inequalities are preserved, since 
$\|T_a\|_{H[X_o]\to H[Y]}\leq \|T_a\|_{H[X]\to H[Y]}$. In the case of $X=L^{\infty}$ we cannot use the previous argument, since $X_o=\{0\}$. However, we may take $\cC:=\cC(\mathbb{T})$ instead, which gives disc 
algebra $\cA$ in place of $H[X_o]$. Then the proof of Theorem \ref{Tw-BH} follows 
the same lines, once we know that $M(\cC,Y)\equiv M(L^{\infty},Y)\equiv Y$, but it is 
evident since $\chi_0\in \cC$. 
\endproof

When $X=Y$ we get another corollary of Theorem \ref{Tw-BH}, which improves assumptions of  \cite[Theorem~4.5]{K04}, since we do not require that $X$ is reflexive. 

\begin{corollary}
Let $X$ be a separable r.i. B.f.s. with nontrivial Boyd indices and the Fatou property. If a linear operator $A$ is bounded on $H[X]$ and there exists a sequence  
$(a_n)_{n\in\Z}$ of complex numbers satisfying \eqref{Toep1},
then there exists a function $a\in L^\infty$ such that $A=T_a$ and 
$\widehat{a}(n)=a_n$ for all $n\in\Z$. Moreover,
\[
\|a\|_{L^\infty}
\leq 
\|T_a\|_{H[X]\to H[X]}
\leq 
\|P\|_{X\to X}\|a\|_{L^\infty}.
\]
\end{corollary}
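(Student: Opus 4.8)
The plan is to obtain this corollary as a direct specialization of the General Brown--Halmos theorem (Theorem~\ref{Tw-BH}) to the case $Y=X$. First I would verify that all hypotheses of Theorem~\ref{Tw-BH} are met: the inclusion $X\subset Y$ is trivial since $X=Y$; the space $Y=X$ has nontrivial Boyd indices and the Fatou property by assumption; and separability of both $X$ and $Y$ is exactly the separability of $X$. The operator $A$ is bounded on $H[X]=H[Y]$ and satisfies the Toeplitz-matrix condition \eqref{Toep1}. Hence Theorem~\ref{Tw-BH} applies and produces a function $a\in M(X,X)$ with $A=T_a$ and $\widehat a(n)=a_n$ for all $n\in\Z$.

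The second step is to identify the symbol space. As recalled in Section~2, one has the isometric identity $M(X,X)\equiv L^\infty$ for an arbitrary B.f.s.~$X$; in particular $a\in L^\infty$ with $\|a\|_{M(X,X)}=\|a\|_{L^\infty}$. Substituting this into the norm estimate \eqref{Toep1norm} of Theorem~\ref{Tw-BH}, and noting that here the target space of the Riesz projection is $Y=X$, yields exactly
\[
\|a\|_{L^\infty}\leq \|T_a\|_{H[X]\to H[X]}\leq \|P\|_{X\to X}\|a\|_{L^\infty},
\]
which is the assertion. The symbol $a$ is determined by the prescription $\widehat a(n)=a_n$, $n\in\Z$, so uniqueness comes for free from uniqueness of Fourier coefficients.

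There is essentially no obstacle here, the corollary being a genuine specialization; the only point worth emphasizing is that Theorem~\ref{Tw-BH} requires only separability together with nontrivial Boyd indices and the Fatou property of $X$, and never reflexivity --- this is precisely what makes the present statement sharper than \cite[Theorem~4.5]{K04}. (If separability of $X$ is dropped, the correct replacement is the nonseparable formulation of Theorem~\ref{BHnonsep}, but that falls outside the scope of this corollary.)
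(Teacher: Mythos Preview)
Your proposal is correct and matches the paper's approach exactly: the corollary is obtained simply by setting $Y=X$ in Theorem~\ref{Tw-BH} and invoking the identity $M(X,X)\equiv L^\infty$. The paper does not spell out a separate proof for this corollary, so your verification of the hypotheses and the substitution into \eqref{Toep1norm} is precisely what is intended.
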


\section{Hankel operators}

In order to prove generalized Nehari theorem we need to state some results on pointwise products of Hardy type spaces. 
The theorem below may be regarded as a kind of regularization for the Lozanovskii's type factorization (see forthcoming paper   \cite{LMM17} for more general treating of this subject). The case of $H[X]\odot H[X']=H^1$ was already considered in  \cite[Theorem 5.2]{K04}. The proof below goes similar lines, but we provide it for the sake of convenience. 

The pointwise product $H[X]\odot H[Y]$ of two Hardy spaces is defined analogously as in (\ref{product}), this is 
\begin{equation*}
H[X]\odot H[Y]=\{ h\in L^0\colon h=fg,f\in H[X],g\in H[Y]\}, 
\end{equation*}
with 
\begin{equation*}
\| h\| _{H[X]\odot H[Y]}=\inf \{ \| f\|_{H[X]}\| g\| _{H[Y]}\colon h=fg\}. 
\end{equation*}
For the moment we do not even know that such a product is a linear space, but it follows at once from the lemma below. 
\begin{theorem}\label{Tw-prod}
Let $X,Y$ be r.i. B.f. spaces with $X\odot Y\subset L^1$. Then
\[
H[X]\odot H[Y]\equiv H[X\odot Y].
\]
\end{theorem}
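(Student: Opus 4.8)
The plan is to prove the two norm-one inclusions $H[X]\odot H[Y]\hookrightarrow H[X\odot Y]$ and $H[X\odot Y]\hookrightarrow H[X]\odot H[Y]$ separately. The first is the easy direction: if $h=fg$ with $f\in H[X]$, $g\in H[Y]$, then $h\in X\odot Y$ with $\|h\|_{X\odot Y}\le\|f\|_X\|g\|_Y$ by definition of the product quasi-norm, and $h$ is analytic (all negative Fourier coefficients vanish) because it is a pointwise product of two functions in $H^1$-type spaces whose product lies in $L^1$ — concretely, $fg$ extends to an analytic function on $\D$ as the product of the analytic extensions of $f$ and $g$, or one invokes that $H[X]\odot H[Y]\subset H^1\odot H^1$ in the sense that the product of two functions with nonnegative spectrum again has nonnegative spectrum once it is integrable. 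Taking the infimum over factorizations gives $\|h\|_{H[X\odot Y]}\le\|h\|_{H[X]\odot H[Y]}$.

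The substantial direction is $H[X\odot Y]\hookrightarrow H[X]\odot H[Y]$: given $h\in X\odot Y$ with $\widehat h(n)=0$ for $n<0$, one must produce an \emph{analytic} factorization $h=fg$ with $f\in H[X]$, $g\in H[Y]$ and good control of $\|f\|_X\|g\|_Y$. The starting point is an arbitrary (not necessarily analytic) factorization $|h|=uv$ with $u\in X$, $v\in Y$ and $\|u\|_X\|v\|_Y$ close to $\|h\|_{X\odot Y}$, which exists since $h\in X\odot Y$. The idea, following \cite[Theorem 5.2]{K04}, is to pass through the Calder\'on--Lozanovskii representation \eqref{repres}, $X\odot Y\equiv(X^{1/2}Y^{1/2})^{(1/2)}$, reducing the product factorization to a factorization problem in the space $Z:=X^{1/2}Y^{1/2}$, and then to \emph{analytize} the factors using outer functions: replace $u$ and $v$ by the moduli of suitable outer functions. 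Specifically, writing $h=h_1 h_2$ at the level of $Z$ with $|h|^{1/2}=|h_1|=g^{1/2}$-type pieces, one sets $F$ to be the outer function with $|F|=u$ on $\T$ and $G$ the outer function with $|G|=v$ on $\T$; these belong to $H[X]$ and $H[Y]$ respectively because $X,Y$ are rearrangement invariant, so $\|F\|_X=\|u\|_X$ and $\|G\|_Y=\|v\|_Y$. Then $FG$ is an outer function with $|FG|=|h|$ on $\T$, so $h=\theta\,FG$ for some inner function $\theta$; absorbing $\theta$ into one factor (say replacing $F$ by $\theta F$, which does not change its modulus hence not its $X$-norm, and keeps it in $H[X]$ since $\theta\in H^\infty$) gives the desired analytic factorization $h=(\theta F)G$ with $\|\theta F\|_X\|G\|_Y=\|u\|_X\|v\|_Y$. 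Taking the infimum yields $\|h\|_{H[X]\odot H[Y]}\le\|h\|_{H[X\odot Y]}$.

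The main obstacle I expect is the justification that the outer function $F$ with prescribed modulus $u\in X$ actually lies in $X$ (and similarly for $G$), i.e. that $\log u\in L^1$ so the outer function exists, and that membership of $|F|=u$ in the r.i.\ space $X$ forces $F\in H[X]$. The first point uses $X\subset L^1$ (so $u\in L^1$ hence $\log^+u\in L^1$) together with a lower bound ensuring $\log u$ is not $-\infty$ on a set of positive measure; one reduces to the case $u>0$ a.e.\ by splitting off the zero set of $h$, which is handled by noting $h\in H^1$ (since $X\odot Y\subset L^1$ and $h$ is analytic, in fact $h\in H[X\odot Y]\subset H[L^1]=H^1$) so $\log|h|\in L^1$ unless $h\equiv0$. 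The second point — that $|F|\in X$ and $F\in H^2$ (or $H^1$) imply $F\in H[X]$ — is immediate from the ideal property of $X$ once $F\in L^0$ with $|F|\in X$, combined with $F$ having nonnegative spectrum. One should also check the edge case $X\odot Y$-norm zero (then $h=0$) and, more delicately, that the infimum in the $X\odot Y$-norm is attained or at least approximated by factorizations $|h|=uv$ with both $u,v$ bounded away from $0$, which may require a small regularization $u\mapsto u+\varepsilon$, $v\mapsto v+\varepsilon$ and a limiting argument using the semi-Fatou property of $X$ and $Y$; this regularization step, reconciling it with \eqref{repres}, is where the technical care concentrates.
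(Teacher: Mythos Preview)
Your approach is essentially the same as the paper's: both use the outer-function trick to analytize an arbitrary factorization $h=fg$ in $X\odot Y$, the key input being that $h\in H[X\odot Y]\subset H^1$ gives $\log|h|\in L^1$, which together with $\log^+|f|,\log^+|g|\in L^1$ (from $X,Y\subset L^1$) forces $\log|f|,\log|g|\in L^1$, so the outer functions $\tilde F,\tilde G$ with boundary moduli $|f|,|g|$ exist and $h/(\tilde F\tilde G)$ is inner. Your regularization worry and the Calder\'on--Lozanovskii detour through $Z=X^{1/2}Y^{1/2}$ are both unnecessary: once $h\not\equiv 0$ in $H^1$ the factors are automatically nonzero a.e.\ with integrable logarithms, so no $\varepsilon$-perturbation or limiting argument is needed (the paper dispatches this step by citing \cite[Proposition~5.1]{K04}).
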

\proof 
 Suppose that $f\in H[X]$, $g\in H[Y]$, regarded as functions on $\T$. Then, we allow $F,G$ to be extensions of $f$ and $g$ to the unit disc $\mathbb{D}$ by convoluting $f$ and $g$ with the Poisson kernel. Evidently, $F,G$ are analytic, their radial limits exist and are equal a.e. to $f,g$, respectively. In consequence, radial limit of $FG$ is equal a.e. to $fg$ and belongs to $H[X\odot Y]$. Thus $H[X]\odot H[Y]\subset H[X\odot Y]$. 

Let $0\not =h\in H[X\odot Y]$. In particular  $h\in X\odot Y$, thus for each $\epsilon>0$ there are $f\in X, g\in Y$ such that $h=fg$ and $\|f\|_X\|g\|_Y-\epsilon\leq \|h\|_{X\odot Y}\leq \|f\|_X\|g\|_Y$. But $ h\in H[X\odot Y]\subset H^1$ and therefore assumptions of Proposition 5.1 from \cite{K04} are satisfied. Thus 
\[
\tilde F(z)=\exp{\int_{\T}\frac{t+z}{t - z}\log{|f(t)|}dt}
\]
and 
\[
\tilde G(z)=\exp{\int_{\T}\frac{t+z}{t - z}\log{|g(t)|}dt}
\]
are well defined outer functions of $F$ and $G$, respectively (see \cite[Section 2.4]{Du70}). It means that radial limits $\tilde f, \tilde g$ of $\tilde F, \tilde G$ satisfy $|\tilde f|=|f|$ and $|\tilde g|=|g|$.  Letting $\Phi =\frac{H}{\tilde F\tilde G}$ we see that $\Phi$ is analytic ($H$ is extension of $h$ to $\D$), since $\tilde F\tilde G$ have no zeros in $\mathbb{D}$ (in particular, $\Phi$ is an inner function). In consequence,  taking $\phi$ as radial limit of $\Phi$ we see that 
\[
x=\phi \tilde f {\rm\ and\ } y= \tilde g,
\]
which gives the required factorization of $h$, i.e. $h=xy$, $\|x\|_{H[X]}=\|f\|_X$ and $\|y\|_{H[Y]}=\|g\|_Y$.
\endproof

The subsequent lemmas will be used in the proof of Nehari theorem. The second one is rather technical, while the first one is of independent interest and may be regarded as complement of considerations from \cite{KLM14}.

\begin{lemma}\label{cancel}
Let $X,Y$ be two r.i. B.f. spaces with the Fatou property. If $X\odot M(X,Y)=Y$, then 
\[
M(X,Y)'=X\odot Y'.
\]
\end{lemma}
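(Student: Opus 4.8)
The plan is to establish the two inclusions $M(X,Y)' \subset X \odot Y'$ and $X \odot Y' \subset M(X,Y)'$ separately, leaning heavily on the factorization hypothesis $X \odot M(X,Y) = Y$ together with the Lozanovskii factorization theorem (\ref{Loz fact}) and the duality identities already available. For the inclusion $X \odot Y' \subset M(X,Y)'$ I expect things to go smoothly: if $h = fg$ with $f \in X$ and $g \in Y'$, then for any $b \in M(X,Y)$ we have $bf \in Y$, hence $bh = (bf)g \in L^1$ with $\|bh\|_{L^1} \le \|bf\|_Y \|g\|_{Y'} \le \|b\|_{M(X,Y)} \|f\|_X \|g\|_{Y'}$; taking infimum over factorizations of $h$ gives $h \in M(X,Y)'$ with $\|h\|_{M(X,Y)'} \le \|h\|_{X \odot Y'}$. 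This direction does not even use the factorization hypothesis.

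For the reverse inclusion $M(X,Y)' \subset X \odot Y'$, the clean route is to dualize. Using the representation (\ref{repres}) and the known duality behaviour of product spaces, one has $(X \odot Y')' = M(X, (Y')') = M(X, Y'')$ when the spaces have the Fatou property; since $Y \in (FP)$ means $Y'' \equiv Y$, this gives $(X \odot Y')' \equiv M(X,Y)$. Now both $M(X,Y)'$ and $(X \odot Y')'' $ are associate spaces, hence have the Fatou property, so it suffices to show $M(X,Y)' \equiv (X \odot Y')''$, i.e. to compute $(X \odot Y')'' $ and check it equals $M(X,Y)'$. But $(X\odot Y')'' \equiv M(X,Y)'$ follows by taking associates again in $(X \odot Y')' \equiv M(X,Y)$, provided $X \odot Y'$ itself has the Fatou property — and that is where the factorization hypothesis $X \odot M(X,Y) = Y$ must enter, because in general $X \odot Y'$ need not have the Fatou property and the biassociate can be strictly larger.

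Concretely, the argument I would run is: first show $M(X,Y)' \equiv (X \odot Y')'$'s associate, i.e. $M(X,Y)' \equiv (X\odot Y')''$, always holds by the computation above; then show that the factorization hypothesis forces $X \odot Y'$ to coincide with its own biassociate. For the latter, the idea is that $X \odot M(X,Y) = Y$ should, after applying $'$ and using $M(X,Y)' \supset X \odot Y'$ from the easy direction, yield an inclusion in the other direction: if $h \in M(X,Y)'$, pair it against a general element $g \in Y$, write $g = f b$ with $f \in X$, $b \in M(X,Y)$ (using the factorization), and deduce $\int |h g| = \int |h f b| \le \|b\|_{M(X,Y)} \cdot (\text{something involving } hf)$ — this shows $hf$ defines a bounded functional against $M(X,Y)$-functions scaled appropriately, forcing $hf \in M(X,Y)'' \equiv M(X,Y)'$... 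Actually the cleaner packaging: $h \in M(X,Y)'$ means $hb \in L^1$ for all $b$, equivalently (by $X \odot M(X,Y) = Y$) $h \cdot Y \subset X \odot$(something), and unwinding gives $h \in M(M(X,Y), (X)') = M(M(X,Y), X')$, then use $M(M(X,Y),X') \equiv X \odot Y'$ which again reduces to the product/multiplier duality and (\ref{repres}).

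**The main obstacle.** The delicate point is the interplay between the Fatou property and the product operation: $X \odot Y'$ need not inherit the Fatou property from $X$ and $Y'$, so the clean "take associates twice" argument can fail, and it is precisely the hypothesis $X \odot M(X,Y) = Y$ that I expect must be used to pin $X \odot Y'$ to its biassociate. Getting the constants to match (the $\equiv$ rather than merely $=$) will require being careful that every invocation of Lozanovskii's theorems (\ref{Loz fact}), (\ref{lozan dual}) and of the representation (\ref{repres}) is used in its isometric form, and that the multiplier duality $M(Z,W) \equiv M(W', Z')$ is applied only when the Fatou property holds for the relevant space, as recorded in Lemma \ref{order multipliers}'s proof.
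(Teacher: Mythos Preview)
Your easy inclusion $X\odot Y'\subset M(X,Y)'$ is fine, and so is the duality computation $(X\odot Y')'\equiv M(X,Y)$, hence $(X\odot Y')''\equiv M(X,Y)'$. You have correctly isolated the crux: one must show $X\odot Y'$ coincides with its biassociate, and this is precisely where the factorization hypothesis must be spent. The problem is that none of your sketched resolutions actually spends it. The attempt via ``write $g=fb$ and pair'' gives only $h\in Y'$, not $h\in X\odot Y'$; the attempt via $M(M(X,Y),X')$ reduces, after the flip $M(A,B)\equiv M(B',A')$, to $M(X,M(X,Y)')$, which is circular since it is $M(X,Y)'$ you are trying to identify. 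So the gap is real: you have reduced the lemma to ``$X\odot M(X,Y)=Y$ implies $X\odot Y'$ has the Fatou property'', and then stopped.

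The paper's argument takes a completely different route that avoids the Fatou-property question for $X\odot Y'$ altogether. It applies Lozanovskii factorization (\ref{Loz fact}) twice to obtain
\[
M(X,Y)\odot M(X,Y)'\equiv L^1\equiv Y\odot Y'=X\odot M(X,Y)\odot Y',
\]
then passes to the Calder\'on--Lozanovskii picture via the representation (\ref{repres}): taking $4$-convexifications, both sides become $[M(X,Y)^{(2)}]^{1/2}W^{1/2}$ with $W=(M(X,Y)')^{(2)}$ on the left and $W=X^{1/2}(Y')^{1/2}$ on the right. The key external tool is the \emph{uniqueness theorem} for the Calder\'on--Lozanovskii construction (Cwikel--Nilsson \cite{CN03}, Berezhno\u{\i}--Maligranda \cite{BM05}): since all the spaces involved are B.f.\ spaces with the Fatou property, the common factor $[M(X,Y)^{(2)}]^{1/2}$ can be cancelled, yielding $(M(X,Y)')^{(2)}=X^{1/2}(Y')^{1/2}$, i.e.\ $M(X,Y)'=X\odot Y'$. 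This cancellation principle is the missing idea in your proposal; without it, I do not see a direct way to force $X\odot Y'=(X\odot Y')''$ from the hypothesis.
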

\proof
Of course, the assumption $X\odot M(X,Y)=Y$ implies that $M(X,Y)\not =\{0\}$ and thus $X\subset Y$. 
The Lozanovskii factorization theorem (\ref{Loz fact}) applied twice gives
\begin{equation}\label{skrac1}
M(X,Y)\odot M(X,Y)'\equiv L^1\equiv Y\odot Y'= X\odot M(X,Y) \odot Y'.
\end{equation}
Thus applying Theorem 1 from  \cite{KLM14} we may write 
$$
[M(X,Y)\odot M(X,Y)']^{(4)}= [M(X,Y)^{(2)}]^{1/2}[(M(X,Y)')^{(2)}]^{1/2}.
$$
At the same time we have
\[
[X\odot M(X,Y) \odot Y']^{(4)}=[M(X,Y)^{(2)}]^{1/2}[X^{1/2}(Y')^{1/2}]^{1/2}.
\]
Thus equality (\ref{skrac1}) gives
$$
 [M(X,Y)^{(2)}]^{1/2}[(M(X,Y)')^{(2)}]^{1/2}=[M(X,Y)^{(2)}]^{1/2}[X^{1/2}(Y')^{1/2}]^{1/2}
$$
and applying uniqueness of the Calder\'on--Lozanovskii construction (\cite{CN03} or \cite{BM05}), since all spaces $M(X,Y)^{(2)},(M(X,Y)')^{(2)}$ and $X^{1/2}(Y')^{1/2}$ are B.f. spaces with the Fatou property, we conclude that 
$$
X^{1/2}(Y')^{1/2}=(M(X,Y)')^{(2)},
$$
or, equivalently, 
$$
M(X,Y)'= X\odot Y',
$$
which proves our claim.
\endproof

\begin{lemma}\label{Le-dens}
Let $X,Y$ be two r.i. B.f. spaces such that $X$ is separable and $X\subset Y$. Then the set 
$$
S=\{pq\colon p,q\in 
\mathcal{P}_A {\rm\ and\ } \|p\|_{H[X]}\leq 1, \|q\|_{H[Y']}\leq 1\}
$$ 
is dense in the unit ball of $H[X]\odot H[Y']$.
\end{lemma}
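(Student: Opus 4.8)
The plan is to transfer the problem to the single Hardy space $H[X\odot Y']$. Since $X\subset Y$ forces $Y'\subset X'$, we get $X\odot Y'\subset X\odot X'\equiv L^1$, so Theorem~\ref{Tw-prod} gives $H[X]\odot H[Y']\equiv H[X\odot Y']$, and it suffices to prove that $S$ is dense in the unit ball of $H[X\odot Y']$. Two preliminary observations: $S$ lies in that unit ball, directly from the definition of $\|\cdot\|_{H[X]\odot H[Y']}$; and $X\odot Y'$ is order continuous because $X$ is. For the latter, if $0\le w_n\le w=uv$ with $0\le u\in X$, $0\le v\in Y'$ and $w_n\to 0$ a.e., then $w_n=u_nv_n$ with $u_n=u\,(w_n/w)^{1/2}\le u$ and $v_n=v\,(w_n/w)^{1/2}\le v$, so $u_n\to 0$ a.e.\ and hence $\|w_n\|_{X\odot Y'}\le\|u_n\|_X\|v\|_{Y'}\to 0$ by order continuity of $X$; the same splitting shows that a product of an order continuous element of $X$ with any element of $Y'$ is order continuous in $X\odot Y'$.

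Fix $h$ in the unit ball of $H[X\odot Y']=H[X]\odot H[Y']$ and $\epsilon>0$, and choose $f\in H[X]$, $g\in H[Y']$ with $h=fg$ and $\|f\|_{H[X]}\|g\|_{H[Y']}<1+\epsilon$; after rescaling we may assume $\|f\|_{H[X]}\le 1$ and $\|g\|_{H[Y']}\le 1+\epsilon$. The main tool is a radial maximal estimate: for $u$ holomorphic on $\D$ with boundary function in an r.i.\ B.f.s.\ $Z$, subharmonicity of $|u|^{1/2}$ yields $M_{\mathrm{rad}}u:=\sup_{0<\rho<1}|u*P_\rho|\le C\,(M(|u|^{1/2}))^{2}$, where $P_\rho$ is the Poisson kernel and $M$ the Hardy--Littlewood maximal operator; since the upper Boyd index of the $2$-convexification $Z^{(2)}$ is $\tfrac12\beta_Z\le\tfrac12<1$, Boyd's theorem makes $M$ bounded on $Z^{(2)}$, whence $M_{\mathrm{rad}}u\in Z$ with $\|M_{\mathrm{rad}}u\|_Z\le C'\|u\|_Z$. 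Applying this to $f$ (with $Z=X$) and to $g$ (with $Z=Y'$), the function $W:=(M_{\mathrm{rad}}f)(M_{\mathrm{rad}}g)$ lies in $X\odot Y'$ and is order continuous there, since $M_{\mathrm{rad}}f\in X=X_o$.

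The proof is then finished by a two-step approximation. For $0<\rho<1$ set $f_\rho=f*P_\rho$, $g_\rho=g*P_\rho$, $h_\rho=h*P_\rho$; passing to holomorphic extensions on $\D$ one has $h_\rho=f_\rho g_\rho$, hence $|h_\rho|\le W$; also $|h|\le W$, and $h_\rho\to h$ a.e.\ as $\rho\to 1$, so $|h_\rho-h|\le 2W\in(X\odot Y')_o$ and order continuity gives $\|h_\rho-h\|_{X\odot Y'}\to 0$. Fixing $\rho$ close to $1$, the functions $f_\rho,g_\rho$ extend holomorphically past $\overline\D$, so they are continuous on $\overline\D$, and their Fej\'er means $p_n=f_\rho*K_n$, $q_n=g_\rho*K_n$ are analytic polynomials converging uniformly on $\T$ to $f_\rho,g_\rho$, with $\|p_n\|_{H[X]}\le\|f_\rho\|_{H[X]}\le 1$ and $\|q_n\|_{H[Y']}\le\|g_\rho\|_{H[Y']}\le 1+\epsilon$ (the Fej\'er and Poisson kernels have $L^1$-norm one, so convolution with them does not increase the norm in $X$ or $Y'$, using $\|\cdot\|_X=\|\cdot\|_{X''}$ and the exact interpolation property of the Fatou spaces $X''$, $Y'$ between $L^1$ and $L^\infty$). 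Then $p_nq_n\to h_\rho$ uniformly, hence in $X\odot Y'$ since $L^\infty\subset X\odot Y'$ continuously; choosing $n$ large, $p_n\cdot\frac{q_n}{1+\epsilon}\in S$ and, by the quasi-triangle inequality, this element lies within $C\epsilon$ of $h$ in $X\odot Y'$ for a constant $C$ depending only on the space. Letting $\epsilon\to 0$ gives the density.

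The step I expect to be the real obstacle is $\|h_\rho-h\|_{X\odot Y'}\to 0$: because $Y'$ need not be separable one cannot approximate $g$ in the $Y'$-norm, and a.e.\ convergence of the dilates alone is not enough. The way around this is to exploit the analytic product structure $h_\rho=f_\rho g_\rho$ in order to dominate the entire family $\{h_\rho\}$ by a single order continuous element of $X\odot Y'$ built from the radial maximal functions of the two factors; once that domination is available, order continuity closes the argument, and everything else reduces to routine Fej\'er-mean estimates and quasi-norm bookkeeping.
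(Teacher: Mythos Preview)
Your proof is correct and follows a genuinely different route from the paper's. The paper handles the possible non-separability of $Y'$ by first establishing the identity $X\odot Y'\equiv X\odot (Y')_o$ (via the representation $X\odot Y'\equiv(X^{1/2}(Y')^{1/2})^{(1/2)}$, Lozanovskii duality, and the observation that $X^{1/2}(Y')^{1/2}$ and $X^{1/2}[(Y')_o]^{1/2}$ are both order continuous with the same K\"othe dual); once both factors $X$ and $(Y')_o$ are separable, analytic polynomials are dense in $H[X]$ and $H[(Y')_o]$ separately by Lemma~\ref{le:dens}, and a straightforward quasi-triangle estimate on a single factorization $(1-\epsilon)f=gh$ finishes the job. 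You instead leave $Y'$ untouched and attack the product directly: from subharmonicity of $|u|^{1/2}$ and Boyd's theorem on the $2$-convexifications $X^{(2)}$, $(Y')^{(2)}$ you build a single order-continuous dominant $W=(M_{\mathrm{rad}}f)(M_{\mathrm{rad}}g)\in X\odot Y'$ for the whole family $\{h_\rho\}$, which forces $\|h_\rho-h\|_{X\odot Y'}\to 0$ even though $g$ itself cannot be approximated in $Y'$; uniform Fej\'er approximation of the continuous dilates $f_\rho,g_\rho$ then supplies the polynomials with controlled norms. The paper's argument is purely function-space theoretic and avoids any maximal inequality, while yours trades the $X\odot Y'\equiv X\odot(Y')_o$ step for a harmonic-analytic estimate; both reach the same conclusion, and your observation that order continuity of $X$ alone already makes $X\odot Y'$ order continuous is essentially the same phenomenon the paper exploits through the Calder\'on--Lozanovskii machinery.
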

\proof
If $Y=L^{1}$, then $X\odot Y'\equiv X\odot L^{\infty}\equiv X$ and $H[X]\odot H^{\infty}\equiv H[X]$. Since $H[X]$ is separable, the claim follows. 
We can therefore assume that $Y\not = L^1$, which means that $Y'\not =L^{\infty}$ and thus $L^{\infty}\subset (Y')_o$. 

First of all we need to explain that 
\[
X\odot Y'\equiv X\odot (Y')_o. 
\]
In order to do it, we use representation
\[
X\odot Y'\equiv (X^{1/2} (Y')^{1/2})^{(1/2)}
\]
(see \cite[Theorem~1(iv)]{KLM14}). Thus, it is enough to prove that $X^{1/2} (Y')^{1/2}\equiv X^{1/2} (Y')_o^{1/2}$. 
However, both spaces $X^{1/2}(Y')^{1/2}$ and $X^{1/2}[(Y')_o]^{1/2}$ are order continuous, since $X$ is order continuous (see  \cite[Proposition 4]{Re88} or \cite[Theorem 13]{KL10}). Therefore, both have the semi-Fatou property, as order continuous spaces. It follows that their norms are realized by duality as in (\ref{eq:pol-norm-2}) in Lemma \ref{le:pol-norm}. On the other hand, Lozanovskii duality theorem (\ref{lozan dual}), together with the equality (\ref{drugi dual}), tells that their K\"othe duals are both equal $X'^{1/2}Y''^{1/2}$. Thus both spaces have to be equal, because simple functions belong to both of them and are dense there. 
In consequence, also equality $X\odot Y'\equiv X\odot (Y')_o$ is proved. 

Continuing, we see that $X\odot Y'\subset L^1$, because $X\subset Y$.  
Hence, by Theorem~\ref{Tw-prod}, we get 
\[
H[X]\odot H[Y']\equiv H[X]\odot H[(Y')_o]. 
\]
Therefore, it is enough to prove density of $S$ in the unit ball of $H[X]\odot H[(Y')_o]$. 
However, both spaces $X$ and $(Y')_o$
are separable, therefore the set $\mathcal{P}_A\cap B(H[X])$ is dense in 
$B(H[X])$ and  the set $\mathcal{P}_A\cap B(H[(Y')_o])$ 
is dense in $B(H[(Y')_o])$ in view of 
Lemma \ref{le:dens}.

Let $\epsilon>0$ and  $f\in B(H[X]\odot H[(Y')_o])$. Then $(1-\epsilon)f=gh$ for some $g\in H[X]$ and  $h\in H[(Y')_o]$ satisfying $\|g\|_{H[X]}<1$ and  $\|h\|_{H[Y]}<1$. Furthermore, there are $p,q\in \mathcal{P}_A$ such that $\|p\|_{H[X]}<1$, $\|q\|_{H[Y]}<1$ and $\|g-p\|_{H[X]}<\epsilon$, $\|h-q\|_{H[Y]}<\epsilon$. It means that 
\[
\|f- pq\|_{H[X]\odot H[Y']}\leq 2(\epsilon+\|gh- pq\|_{H[X]\odot H[Y']})
\]
\[
\leq 2\epsilon +4\|g\|_{H[X]}\|h-q\|_{H[Y']}+4\|g-p\|_{H[X]}\|q\|_{H[Y']}\leq 8\epsilon,
\]
where the constant $2$ appears when we apply triangle inequality to the quasi norm $\|\cdot \|_{H[X]\odot H[Y']}$ (see \cite[Corollary 1]{KLM14}). 
\endproof

The following lemma is a key for the general Nehari theorem. Let us however postpone its proof to the next part of this section, because we will be able to comment it and its assumptions better, once we know how it works in the proof of Theorem \ref{extTw-Neh}.

\begin{lemma}\label{Ban-env-Cor}
Let $X,Y$ be two r.i. B.f. spaces, such that $X$ is separable, 
$X\subset Y$, $Y$ has nontrivial Boyd indices and one of the following conditions holds:\\
i) $X\odot M(X,Y)=Y$ and $X,Y$ have the Fatou property,\\
ii) $\beta_X<\alpha_Y$.\\
Then for each bounded linear  functional $\phi$ on $\overline{H[X\odot Y']}$ there is  $g\in M(X,Y)$ (not  unique) such that 
\[
\phi(f)=\int_{\T}g(t)f(t)dm(t)
\]
for all $f\in \overline{H[X\odot Y']}$. Furthermore,  
\begin{equation}\label{normrepr}
\|\phi\|_{(\overline{H[X\odot Y']})^*}\approx \dist_{M(X,Y)}(\chi_1g,\overline{H[M(X,Y)]}).
\end{equation}
\end{lemma}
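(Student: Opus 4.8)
The strategy is to identify the dual of $\overline{H[X\odot Y']}$ with a quotient of $M(X,Y)$ via the Lozanovskii and product-space machinery, and then to read off the distance formula \eqref{normrepr} from the standard duality between a subspace and the quotient by its annihilator. First I would use Theorem~\ref{Tw-prod} to write $H[X\odot Y']\equiv H[X]\odot H[Y']$ (noting $X\odot Y'\subset L^1$ since $X\subset Y$), and hence the conjugate space $\overline{H[X\odot Y']}$ is naturally paired with a space of multipliers. The key algebraic input is that, under hypothesis (i), Lemma~\ref{cancel} gives $M(X,Y)'\equiv X\odot Y'$, while under hypothesis (ii) the strict inequality $\beta_X<\alpha_Y$ forces the factorization $X\odot M(X,Y)=Y$ to hold automatically (this is where the Boyd-index condition is used; it should be citable from \cite{KLM14} or provable by interpolation), so in both cases one has $M(X,Y)'\equiv X\odot Y'$, and since everything has the Fatou property, also $(X\odot Y')'\equiv M(X,Y)''\equiv M(X,Y)$.

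Granting $(X\odot Y')'\equiv M(X,Y)$, a functional $\phi$ on $\overline{H[X\odot Y']}$ is, by Lemma~\ref{Le-dens}, determined by its values on products $\overline{pq}$ with $p\in\mathcal{P}_A\cap B(H[X])$, $q\in\mathcal{P}_A\cap B(H[Y'])$; I would extend $\phi$ (Hahn--Banach) to all of $X\odot Y'$ — or rather to $\overline{X\odot Y'}$ — obtaining some $g_0\in M(X,Y)$ representing it, $\phi(f)=\int_\T g_0 f\,dm$. The ambiguity in this extension is exactly the annihilator of $\overline{H[X\odot Y']}$ inside $M(X,Y)$: pairing against $\overline{H[X\odot Y']}$, a multiplier $b\in M(X,Y)$ annihilates it iff $\int_\T b\,\overline{fg}\,dm=0$ for all $f\in H[X], g\in H[Y']$, i.e. iff $\widehat{b}(n)=0$ for all $n\ge 1$ once one unwinds the conjugation; after the harmless twist by $\chi_1$ this annihilator becomes $\overline{H[M(X,Y)]}$ (the calculation here mirrors the classical Nehari identification of $\overline{H^\infty}$ as the annihilator, and should be done via Fourier coefficients on polynomials, then extended by the density in Lemma~\ref{Le-dens}). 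Thus the set of admissible symbols $g$ is a coset $g_0+\{b:\chi_1 b\in\overline{H[M(X,Y)]}\}$, which yields the existence (and non-uniqueness) of $g\in M(X,Y)$ in the statement.

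For the norm identity \eqref{normrepr}, I would invoke the standard isometric duality $(Z/N)^*\equiv N^\perp$ and $(\,\text{subspace}\,)^*\equiv Z^*/(\text{subspace})^\perp$: concretely, $\overline{H[X\odot Y']}$ is a closed subspace of $\overline{X\odot Y'}$ (a separable space, since $X$ is separable and, as in the proof of Lemma~\ref{Le-dens}, $X\odot Y'\equiv X\odot (Y')_o$ is order continuous), so its dual is $(X\odot Y')^*/(\overline{H[X\odot Y']})^\perp\equiv M(X,Y)/\overline{\chi_{-1}H[M(X,Y)]}$, and the quotient norm of the class of $\chi_1 g$ is precisely $\dist_{M(X,Y)}(\chi_1 g,\overline{H[M(X,Y)]})$. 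Matching the two descriptions of $\|\phi\|$ gives \eqref{normrepr}; the ``$\approx$'' rather than ``$=$'' absorbs the equivalence constants coming from $(X\odot Y')^*\simeq M(X,Y)$ (which is an equivalence of norms, not an isometry, because $X\odot Y'$ need not have exactly the Fatou normalization) and from the quasi-triangle constant in the product quasi-norm.

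\textbf{Main obstacle.} The delicate point is making the annihilator computation rigorous in the nonseparable-multiplier, quasi-Banach-product setting: one must be careful that $M(X,Y)$ is genuinely the K\"othe dual of the order-continuous space $X\odot Y'$ (so that functionals are integral), that the conjugation $f\mapsto\bar f$ interacts correctly with the pairing $\langle\cdot,\cdot\rangle$ and with the Riesz projection, and that the density statement of Lemma~\ref{Le-dens} really lets one pass from polynomial identities to the full annihilator description. The second subtlety is verifying that hypothesis (ii), $\beta_X<\alpha_Y$, does imply $X\odot M(X,Y)=Y$ together with the Fatou-type properties needed to run Lemma~\ref{cancel}; this is the step where I would most expect to need an auxiliary lemma or a careful citation to \cite{KLM14}, since strict separation of Boyd indices is exactly the regime where such factorizations are known to be automatic.
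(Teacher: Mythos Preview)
Your treatment of case~(i) is essentially the paper's argument: Lemma~\ref{cancel} gives $M(X,Y)'=X\odot Y'$, hence $(X\odot Y')'=M(X,Y)$; separability of $X\odot Y'$ yields $(X\odot Y')^*=M(X,Y)$; and Hahn--Banach on the closed subspace $\overline{H[X\odot Y']}$ does the rest, with the annihilator identified as you describe.

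The gap is in case~(ii). Your claim that $\beta_X<\alpha_Y$ forces $X\odot M(X,Y)=Y$ is false, and the paper itself provides a counterexample: take $X=L^{p_1,1}$, $Y=L^{p,\infty}$ with $1<p<p_1<\infty$; then $\beta_X=1/p_1<1/p=\alpha_Y$, yet \cite[Theorem~10]{KLM14} shows $X$ does not factorize $Y$. The paper explicitly remarks that conditions (i) and (ii) are independent. So you cannot reduce (ii) to (i).

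More fundamentally, under (ii) the space $X\odot Y'$ need not be a Banach space at all (in the example above, $X\odot Y'=L^{p_2,1/2}$ with exponent $1/2<1$), and Hahn--Banach is unavailable in genuinely quasi-Banach spaces. This is the real obstacle your outline misses: you cannot simply ``extend $\phi$ to all of $X\odot Y'$.'' The paper's route here is different in kind. It first shows that the hypothesis $\beta_X<\alpha_Y$ forces $X\odot Y'$ to have nontrivial Boyd indices, then invokes Lemma~\ref{Ban-env} to identify the Banach envelope $H[X\odot Y']^{\wedge}=H[(X\odot Y')^{\wedge}]$. Since a quasi-Banach space and its Banach envelope share the same dual, one has $(\overline{H[X\odot Y']})^*=(\overline{H[(X\odot Y')^{\wedge}]})^*$, and now Hahn--Banach applies in the Banach space $(X\odot Y')^{\wedge}$, whose dual is still $(X\odot Y')'=M(X,Y)$. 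The $\approx$ in \eqref{normrepr} then absorbs the constant from Lemma~\ref{Ban-env}, not merely a quasi-triangle constant.
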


\begin{theorem}[General Nehari theorem]\label{extTw-Neh}
Let $X,Y$ be two r.i. B.f. spaces, such that $X$ is separable, 
$X\subset Y$, $Y$ has nontrivial Boyd indices and one of the following conditions holds:\\
i) $X\odot M(X,Y)=Y$ and $X,Y$ have the Fatou property,\\
ii) $\beta_X<\alpha_Y$.\\
A continuous linear operator $A:H[X]\to H[Y]$ satisfies 
\begin{equation}\label{Hankel condition}
\langle A\chi_j,\chi_k\rangle=a_{k+j+1} {\rm \ for\ all\ } j,k\geq 0
\end{equation}
and some sequence 
$(a_k)_{k>0}$ if and only if there exists $a\in M(X,Y)$ such that $\hat a(n)=a_n$ for each
$n>0$ and $A=H_a$, i.e. $A\colon f\mapsto PaJf$. Moreover, 
\[
c\dist_{M(X,Y)}(a,\overline{ H[M(X,Y)]})
\leq 
\|H_a\|_{H[X]\to H[Y]}
\]
\[
\leq \|P\|_{Y\to Y}\dist_{M(X,Y)}(a,\overline{ H[M(X,Y)]}),
\]
where the constant $c>0$ depends only on spaces $X,Y$. 
\end{theorem}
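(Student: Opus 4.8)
The plan is to follow the classical pattern: reduce the existence of the symbol $a$ to a Hahn--Banach extension problem, and then invoke Lemma~\ref{Ban-env-Cor} to produce $a\in M(X,Y)$ from a bounded functional on $\overline{H[X\odot Y']}$. As always, only necessity needs proof, since the boundedness of $H_a\colon H[X]\to H[Y]$ whenever $a\in M(X,Y)$ follows from $H_a f=P(aJf)$, $\|aJf\|_Y\le\|a\|_{M(X,Y)}\|f\|_X$ and boundedness of $P$ on $Y$ (recall $Y$ has nontrivial Boyd indices); and the modification $a\mapsto a+b$ with $b\in\overline{H[M(X,Y)]}$, $\hat b(n)=0$ for $n>0$ except $\hat b(0)$ --- more precisely $b\in\chi_0\overline{H[M(X,Y)]}$ --- leaves $H_a$ unchanged, which explains why the norm equality must involve $\dist_{M(X,Y)}(a,\overline{H[M(X,Y)]})$ rather than $\|a\|_{M(X,Y)}$.

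For necessity, suppose $A\colon H[X]\to H[Y]$ is bounded with $\langle A\chi_j,\chi_k\rangle=a_{k+j+1}$. First I would set up the right bilinear form. For $f\in\mathcal{P}_A\subset H[X]$ and $q\in\mathcal{P}_A\cap B(H[Y'])$, consider $\Psi(f,q):=\langle Af,\bar q\,\chi_{-1}\rangle$ (the appropriate pairing so that $\langle A\chi_j,\overline{\chi_k}\,\chi_{-1}\rangle=\langle A\chi_j,\chi_{k+1}\rangle=a_{k+j+1}$ matches the Hankel pattern). Using the duality $H[Y]^*\simeq H[Y']$ from Lemma~\ref{duality} together with boundedness of $P$ on $Y'$, one gets $|\Psi(f,q)|\le C\|f\|_{H[X]}\|q\|_{H[Y']}$. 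The key combinatorial observation --- exactly as in the classical Nehari argument --- is that $\Psi(f,q)$ depends only on the product $fq$: indeed $\langle A\chi_j,\overline{\chi_k}\chi_{-1}\rangle=a_{j+k+1}$ depends only on $j+k$, so $\Psi$ descends to a well-defined linear functional $\phi$ on the linear span of $\{pq: p,q\in\mathcal{P}_A\}$, which by Lemma~\ref{Le-dens} (applied with $Y$ in place of what is there called $Y$, so that $H[Y']$ appears) is dense in $H[X]\odot H[Y']$, with $|\phi(h)|\le C\|h\|_{H[X]\odot H[Y']}$. Hence $\phi$ extends to a bounded functional on $H[X]\odot H[Y']$, and a fortiori --- via the inclusion $H[X]\odot H[Y']\subset \overline{H[X\odot Y']}$ after an appropriate conjugation/rotation bookkeeping, which is where one identifies $H[X]\odot H[Y']$ (analytic) with (a translate of) $\overline{H[X\odot Y']}$ --- to a bounded functional on $\overline{H[X\odot Y']}$ with comparable norm.

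Now Lemma~\ref{Ban-env-Cor} applies (its hypotheses are exactly those assumed here): there exists $g\in M(X,Y)$ with $\phi(f)=\int_\T g f\,dm$ on $\overline{H[X\odot Y']}$ and $\|\phi\|\approx\dist_{M(X,Y)}(\chi_1 g,\overline{H[M(X,Y)]})$. Unwinding the definition of $\phi$ on products $\chi_j\chi_k$ and comparing Fourier coefficients, one reads off that the relevant symbol is $a:=\chi_1 g$ (up to the index shift built into the Hankel condition), so that $\hat a(n)=a_n$ for $n>0$; then for $f\in\mathcal{P}_A$ one checks $\langle H_af,\chi_k\rangle=\langle aJf,\chi_k\rangle=\hat a(k+j+1)=\langle Af,\chi_k\rangle$ coefficientwise, and since both $H_af$ and $Af$ lie in $H[Y]\subset H^1$, uniqueness of Fourier series gives $H_af=Af$ on $\mathcal{P}_A$, hence $H_a=A$ by density of $\mathcal{P}_A$ in $H[X]$. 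The norm estimate then reads $\|H_a\|_{H[X]\to H[Y]}=\|A\|\approx\|\phi\|\approx\dist_{M(X,Y)}(a,\overline{H[M(X,Y)]})$ after absorbing the index shift $\chi_1 g\leftrightarrow a$, which only changes the distance by the isometry of multiplication by $\chi_1$; the one-sided bound $\|H_a\|\le\|P\|_{Y\to Y}\dist_{M(X,Y)}(a,\overline{H[M(X,Y)]})$ follows separately from the estimate on $H_a$ above applied to any representative $a-b$, $b\in\overline{H[M(X,Y)]}$.

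\textbf{Main obstacle.} The routine parts (coefficient matching, density, boundedness of $P$) are standard; the delicate point is the bookkeeping that identifies the space on which $\phi$ naturally lives --- the analytic product $H[X]\odot H[Y']$ --- with $\overline{H[X\odot Y']}$ as required by Lemma~\ref{Ban-env-Cor}, keeping track of the index shift $\chi_1$ and the conjugation, and verifying that $H[X]\odot H[Y']\equiv H[X\odot Y']$ via Theorem~\ref{Tw-prod} (legitimate since $X\subset Y$ forces $X\odot Y'\subset L^1$). Equally, one must be careful that the extension of $\phi$ from the dense set $S$ to all of $\overline{H[X\odot Y']}$ is norm-preserving up to the quasi-norm constant, rather than just continuous --- this is handled by Lemma~\ref{Le-dens} but requires the quasi-triangle-inequality constant from \cite[Corollary~1]{KLM14} to be tracked through, so that the final constant $c$ depends only on $X,Y$.
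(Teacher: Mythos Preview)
Your overall strategy is exactly the paper's: convert the Hankel operator $A$ into a bounded linear functional on $\overline{H[X\odot Y']}$, then invoke Lemma~\ref{Ban-env-Cor} to produce $g\in M(X,Y)$ and set $a=\chi_1 g$. The sufficiency, the upper norm bound, and the concluding Fourier-coefficient matching and density argument are all as in the paper.

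There is, however, a concrete error in your bilinear-form setup. You write $\Psi(f,q)=\langle Af,\bar q\,\chi_{-1}\rangle$ and claim $\langle A\chi_j,\overline{\chi_k}\,\chi_{-1}\rangle=\langle A\chi_j,\chi_{k+1}\rangle$. But $\overline{\chi_k}\,\chi_{-1}=\chi_{-k-1}$, so your pairing is $\widehat{A\chi_j}(-k-1)=0$ (since $A\chi_j\in H[Y]$). More fundamentally, because $\langle\cdot,\cdot\rangle$ is conjugate-linear in the second slot, the form $(f,q)\mapsto\langle Af,q\rangle$ depends on $f\cdot q^c$ rather than on $fq$, so it does not descend directly to a linear functional on $H[X]\odot H[Y']$.

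The paper resolves this bookkeeping cleanly: it defines the functional directly as $L_A(\bar f)=\langle A\chi_0,f\rangle$ on $\overline{H[X]\odot H[Y']}$, and establishes its boundedness by the identity
\[
\langle A\chi_0,\,pq\rangle=\langle Ap^c,\,q\rangle\qquad(p,q\in\mathcal P_A),
\]
where $p^c(t)=\overline{p(1/t)}$ is an isometry on $\mathcal P_A\subset H[X]$; the right-hand side is then bounded by $\|A\|\,\|p\|_{H[X]}\|q\|_{H[Y']}$. After this, Lemma~\ref{Le-dens} and Lemma~\ref{Ban-env-Cor} apply exactly as you describe. So your plan is correct in spirit, but you should replace your bilinear form $\Psi$ by the functional $f\mapsto\langle A\chi_0,f\rangle$ together with the $p^c$ trick; this eliminates the conjugation/shift bookkeeping you flagged as delicate.
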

\proof
If $a\in M(X,Y)$ and $j,k\geq 0$, then 
$$
\langle H_a\chi_j,\chi_k\rangle 
=
\langle PM_a\chi_{-1-j},\chi_k\rangle
=
\hat a(k+j+1),
$$
as required, while $\|H_a\|_{H[X]\to H[Y]}\leq \|P\|_{Y\to Y}\|a\|_{M(X,Y)}$. 
Notice however, that only $\hat a(k)$'s for positive $k$ play in the definition 
of $H_a$, so that we may write 
\begin{align*}
\|H_a\|_{H[X]\to H[Y]}
&\leq 
\|P\|_{Y\to Y}\inf\{\|b\|_{M(X,Y)}\colon \hat b(k)=\hat a(k) {\rm\ for \ each\ } k>0\}
\\
&= 
\|P\|_{Y\to Y}\dist_{M(X,Y)}(a,\overline{ H[M(X,Y)]}),
\end{align*}
as required. 

Let $A$ satisfy condition (\ref{Hankel condition}). We need to find $a\in M(X,Y)$ such that $\hat a(n)=a_n$ for each $n>0$. Define
\[
\gamma(A)=\sup\{\langle A\chi_0, f\rangle\colon  \|f\|_{H[X]\odot H[Y']}\leq 1\}.
\]
Since $S$, as defined in Lemma \ref{Le-dens}, is dense in $B(H[X]\odot H[Y'])$, we have
\begin{equation}\label{fiA}
\gamma(A)=\sup\{
\langle A\chi_0, pq\rangle\colon  \|p\|_{H[X]}\leq 1,  \|q\|_{H[Y']}\leq 1, p,q\in \mathcal{P}_A
\}.
\end{equation}
On the other hand, 
\[
\|A\|_{H[X]\to H[Y]}=
\sup\{\varphi(Ag)\colon  \|g\|_{H[X]}\leq 1,  \|\varphi\|_{H[Y]^*}\leq 1\}
\]
\[
\geq \sup\{\langle Ag, h\rangle\colon  \|g\|_{H[X]}\leq 1,  \|h\|_{H([Y'])}\leq 1\},
\]
since each $h\in H[Y']$ defines functional $\varphi_h(f)=\langle f,h\rangle$ on $H[Y]$ and evidently $\|\varphi_h\|_{H[Y]^*}\leq \|h\|_{H[Y']}$.
In consequence, 
\begin{equation}\label{normA}
\|A\|_{H[X]\to H[Y]}
\geq
\sup\{\langle Ap, q\rangle\colon  \|p\|_{H[X]}\leq 1,  \|q\|_{H([Y'])}\leq 1, p,q\in \mathcal{P}_A\}.
\end{equation}
For a polynomial $p=\sum{p_k}\chi_k$ define $p^c=\sum \overline{p_k}\chi_k$. Then 
\[
p^c(t)= \overline{p(1/t)}
\]
and, since $t\to 1/t$ is the measure preserving transformation on $\T$,
we see that $p\to p^c$ is isometry on the set 
of analytic polynomials in $H[X]$ and $p^{cc}=p$. Therefore, for $f=pq$, $p,q\in \mathcal{P}_A$, simple calculations shows that
\[
\langle A\chi_0, pq\rangle = \langle Ap^c, q\rangle.
\]
Applying the above to formulas (\ref{fiA})  and (\ref{normA}) we get
\begin{equation}\label{han1}
\gamma(A)\leq \|A\|_{H[X]\to H[Y]}.
\end{equation}
Define now the functional $L_A$ on $\overline{H[X]\odot H[Y']}$ by 
\[
L_A(f)=\int_{\T} A\chi_0(t) f(t)dm(t).
\]
Inequality (\ref{han1}) gives $\|L_A\|=\gamma(A)\leq  \|A\|_{H[X]\to H[Y]}$. Therefore, by Lemma \ref{Ban-env-Cor}, there is $d\in M(X,Y)$ such that 
\[
 L_A(\bar f)=\int_{\T} A\chi_0(t)\bar f(t)dm(t)=\int_{\T} d(t)\bar f(t)dm(t),
\]
for each $f\in H[X]\odot H[Y']$. Finally, for each $n\geq 0$ 
\[
a_{n+1}=\langle A\chi_0, \chi_n\rangle=\hat d(n)
\]
and, taking  $a=\chi_1d\in M(X,Y)$, we see that $\hat a(n)=a_n$ for each $n>0$.
Moreover, from (\ref{normrepr}) it follows
\[
\gamma(A)=\|L_A\|\geq c\dist_{M(X,Y)}(a,\overline{ H[M(X,Y)]}),
\]
where $c>0$ depends only on spaces $X,Y$. 
\endproof


To finish the proof it is enough to prove the remaining Lemma \ref{Ban-env-Cor}. In order to do it, we will need some informations on Banach envelopes of quasi-Banach spaces. Given a quasi-Banach space $X$ with separating dual 
one defines a functional on $X$ by 
\[
\|f\|_{X^{\wedge}}=\inf\{\sum_{k=1}^{n}\|f_k\|_X\colon f=\sum_{k=1}^{n}f_k, f_k\in X,n\in \N\}.
\]
Under assumption that $X^*$ separates points of $X$, it is a norm. In this case, the Banach envelope $X^{\wedge}$ of $X$ is defined as the completion of $X$ with respect to the norm $\|\cdot\|_{X^{\wedge}}$. More informations on Banach envelopes may be found in \cite{KPR84, KK16, KC17, KM07, Sh76}. Let us collect some properties of $X^{\wedge}$ for the special kind of spaces $X$ that will appear in our proofs. 
\begin{itemize}
\item[1)] $X$ and $X^{\wedge}$ have the same dual spaces (see \cite[page 27]{KPR84}). 
\item[2)] If $X\subset L^1$ is separable q-B.f.s. then $X^{\wedge}$ is the closure of $X$ in $X''$. 
In fact, assumption $X\subset L^1$ means that $L^{\infty}\subset X'$ and thus $X^*$ separates points of $X$.
Then $X^{\wedge}$ is the closure in $X^{**}$ of the image of $X$ under $J$, where $(Jx)(x^*)=x^*x$ is the natural canonical embedding (see \cite[page 27]{KPR84}). However, by Propositions 3.2 and 3.4 in \cite{KK16} and by separability of $X$ it follows that $X^*=(X^{\wedge})^*\simeq X'$. On the other hand, it is known that the dual $Z^*$ of a B.f.s. may be represented as $Z'\oplus S$, where $S$ is the space of singular functionals (see \cite[Theorem 2, p. 467]{Za67}). Thus $X^{**}\simeq X''\oplus S$. Finally, for separable q-B.f.s. $X$, its image under $J$ is in $X''$, which explains the claim (cf. \cite[p. 232]{KM07}).
\item[3)] If $X\subset L^1$ is a separable r.i. q-B.f.s., then
\begin{equation}
(X'')_o\equiv \overline{X}^{X''},
\end{equation}
where $\overline{X}^{X''}$ denotes the closure of $X$ in $X''$. 
In fact, since $L^{\infty}\subset X$ and $X$ is order continuous, it follows that $X''\not = L^{\infty}$. Therefore $L^{\infty}\subset(X'')_o$ and  $(X'')_o$ is the closure of simple functions in $X''$. On the other hand, simple functions are contained also in $X$. Thus, evidently $(X'')_o\subset \overline{X}^{X''}$. To see that the second inclusion also holds it is enough to notice that $\|f\|_{X''}=\|f\|_{X^{\wedge}}\leq \|f\|_{X}$ for each $f\in X$, which implies that simple functions are dense in $X$ equipped with the norm of $X''$. Of course, $X^{\wedge}$ is also r.i. B.f.s. (cf. \cite[Lemma 2.1]{KM07}).

\end{itemize}

The next lemma gives representation of the Banach envelope of Hardy space $H[Z]$, which will be used in the proof of Lemma \ref{Ban-env-Cor}. It seems to be of independent interest that such a simple representation is possible. Notice that the crucial assumption here is $Z\subset L^1$, which gives that $Z$ has nontrivial dual, in contrast to the situation of $H^p=H[L^p]$, $p<1$, where $(L^p)^*=\{0\}$, but $(H^p)^*\not=\{0\}$ (see \cite[page 115]{Du70}).

\begin{lemma}\label{Ban-env}
Let $Z$ be  a separable r.i. q-B.f.s. such that $Z\subset L^1$ and $Z$ has nontrivial Boyd indices. Then 
\begin{equation}\label{envrep}
H[Z]^{\wedge}=H[Z^{\wedge}].
\end{equation}
\end{lemma}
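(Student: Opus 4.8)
The plan is to show the two inclusions $H[Z]^{\wedge}\subset H[Z^{\wedge}]$ and $H[Z^{\wedge}]\subset H[Z]^{\wedge}$, using the structural description of Banach envelopes collected in items 1)--3) above. The starting point is that $Z$ is separable with $Z\subset L^1$, so $Z^*$ separates points of $Z$ and, by item 2), $Z^{\wedge}$ is the closure of $Z$ inside $Z''$; combined with item 3) this gives $Z^{\wedge}\equiv(Z'')_o$. Since $Z$ has nontrivial Boyd indices, the Riesz projection $P$ is bounded on $Z$, and one checks that it is bounded on $Z^{\wedge}$ as well (the envelope norm is an infimum of sums of $Z$-norms, so boundedness passes to $X^{\wedge}$ for any operator bounded on $X$ with $\|Pf\|_{Z^\wedge}\le\|Pf\|_{Z}$-type control; alternatively, $(Z'')_o$ inherits nontrivial Boyd indices from $Z''$, which has the same indices as $Z$). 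Hence $H[Z^{\wedge}]=P(Z^{\wedge})$ is a well-defined closed subspace of $Z^{\wedge}$, and $\mathcal{P}_A$ is dense in it because $Z^{\wedge}$ is separable and we can invoke Lemma~\ref{le:dens} (after noting $Z^{\wedge}$ is an r.i. B.f.s. by item 3)).

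For the inclusion $H[Z]^{\wedge}\subset H[Z^{\wedge}]$: by item 2) applied to the separable q-B.f.s. $H[Z]$ (which also satisfies $H[Z]\subset H^1\subset L^1$), $H[Z]^{\wedge}$ is the closure of $H[Z]$ in its bidual, and this bidual norm coincides with the $Z''$-norm on $H[Z]$ since $\|f\|_{H[Z]^{\wedge}}=\|f\|_{H[Z]^{\wedge\wedge}}$ equals $\|f\|_{Z^{\wedge}}=\|f\|_{Z''}$ by the explicit formula for the envelope functional (the defining infimum for $H[Z]$ dominates the one for $Z$, and in the reverse direction any decomposition $f=\sum f_k$ in $Z$ can be projected by $P$ to a decomposition $f=\sum Pf_k$ in $H[Z]$ with controlled norms, so the two envelope functionals agree up to the constant $\|P\|_{Z\to Z}$; more carefully one argues $\|f\|_{H[Z]^\wedge}\approx\|f\|_{Z^\wedge}$ suffices for the set-equality $\equiv$). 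Therefore $H[Z]^{\wedge}$ is the closure of $\mathcal{P}_A$ in the $Z^{\wedge}$-norm, which lies in $H[Z^{\wedge}]$, a closed subspace of $Z^{\wedge}$.

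For the reverse inclusion $H[Z^{\wedge}]\subset H[Z]^{\wedge}$: an arbitrary element of $H[Z^{\wedge}]=P(Z^{\wedge})$ is a $Z^{\wedge}$-limit of analytic polynomials, and $\mathcal{P}_A\subset H[Z]\subset H[Z]^{\wedge}$, so it suffices that $H[Z]^{\wedge}$ is complete in the $Z^{\wedge}$-norm — which it is, being a Banach envelope — and that the $Z^{\wedge}$-norm and the $H[Z]^{\wedge}$-norm are equivalent on $\mathcal{P}_A$, which is exactly the norm comparison established in the previous paragraph. Thus both spaces are the closure of $\mathcal{P}_A$ under equivalent norms, giving $H[Z]^{\wedge}=H[Z^{\wedge}]$.

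\textbf{Main obstacle.} The delicate point is the norm identity $\|f\|_{H[Z]^{\wedge}}=\|f\|_{Z''}$ for $f\in H[Z]$: a priori the infimum defining the $H[Z]$-envelope is over decompositions with \emph{analytic} pieces, which is more restrictive than the one for $Z$. One must show these agree (at least up to equivalence, which is all that $\equiv$ for r.i. spaces requires modulo item 3)), and the natural tool is applying the bounded Riesz projection $P$ to an arbitrary $Z$-decomposition of an analytic $f$; this uses nontriviality of the Boyd indices in an essential way and is the one place where that hypothesis is genuinely needed beyond making $H[Z^{\wedge}]$ well-defined.
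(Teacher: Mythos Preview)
Your approach is essentially the paper's: the core step is the comparison $\|f\|_{H[Z]^{\wedge}}\approx\|f\|_{Z^{\wedge}}$ for $f\in H[Z]$, obtained by applying the bounded Riesz projection $P$ to an arbitrary $Z$-decomposition of an analytic $f$, after which both spaces are identified as completions of the same dense set under equivalent norms. One small slip: you invoke item~2) for $H[Z]$, but $H[Z]$ is not a q-B.f.s.\ (it lacks the ideal property, and its K\"othe bidual is not defined), so that citation is misplaced --- fortunately you do not actually use it, since the direct comparison of the two envelope infima via $P$ already gives everything you need.
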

\proof
First of all notice that for r.i. q-B.f.s. the assumption $Z\subset L^1$ implies $L^{\infty}\subset Z'$. Therefore, $Z'=Z^*$ separates points of $Z$, because  $L^{\infty}$ separates points of $L^1$ and it follows by properties 2) and 3) of Banach envelopes that 
$Z^{\wedge}=(Z'')_o$, where we know that $(Z'')_o\not =\{0\}$, since $Z$ has nontrivial Boyd indices. In particular, $\|f\|_{Z^{\wedge}}= \|f\|_{Z''}$ for $f\in Z^{\wedge}$ and the space $H[Z^{\wedge}]$ is well defined. 

Secondly, $P$ is bounded on  $Z^{\wedge}$, since for each $f\in Z$ there holds 
\[
\|Pf\|_{Z^{\wedge}}\leq \inf\{\sum_{k=1}^{n} \|Pf_k\|_Z\colon f=\sum_{k=1}^{n} f_k, f_k\in Z, n\in \N\}\leq \|P\|_{Z\to Z}\|f\|_{Z^{\wedge}}
\]
and $Z$ is dense in $Z^{\wedge}$. 
Then it follows that $H[Z]$ is dense in $H[Z^{\wedge}]$, because $Z$ is dense in $Z^{\wedge}$ and $P$ is bounded on both $Z$ and $Z^{\wedge}$. Moreover, $H[Z]$ is dense in $H[Z]^{\wedge}$ just by definition. 

Now let $f\in H[Z]$. Then its norm in $H[Z^{\wedge}]$ is  given by 
\[
\|f\|_{H[Z^{\wedge}]}=\|f\|_{Z^{\wedge}}=\inf\{\sum_{k=1}^{n} \|f_k\|_Z\colon  f=\sum_{k=1}^{n} f_k, f_k\in Z, n\in \N\}.
\]
On the other hand, the norm of the same $f$ regarded as an element of $H[Z]^{\wedge}$ is
\[
\|f\|_{H[Z]^{\wedge}}=\inf\{\sum_{k=1}^{n} \|f_k\|_Z\colon f=\sum_{k=1}^{n} f_k, f_k\in H[Z], n\in \N\}.
\]
Evidently, $\|f\|_{H[Z^{\wedge}]}\leq \|f\|_{H[Z]^{\wedge}}$, while the opposite inequality follows from boundedness of $P$ on $Z$. This is
\[
\|f\|_{H[Z^{\wedge}]}=\|f\|_{Z^{\wedge}}=\inf\{\sum_{k=1}^{n} \|f_k\|_Z\colon  f=\sum_{k=1}^{n} f_k, f_k\in Z, n\in \N\}
\]
\[
\geq 1/\|P\|_{Z\to Z} \inf\{\sum_{k=1}^{n} \|Pf_k\|_Z\colon  f=\sum_{k=1}^{n} f_k, f_k\in Z, n\in \N\}
\]
\[
\geq 1/\|P\|_{Z\to Z} \inf\{\sum_{k=1}^{n} \|g_k\|_Z\colon  f=\sum_{k=1}^{n} g_k, g_k\in H[Z], n\in \N\}
\]
\[
= 1/\|P\|_{Z\to Z}\|f\|_{H[Z]^{\wedge}}.
\]
Consequently, $H[Z^{\wedge}]=H[Z]^{\wedge}$ as completions of the same space under equivalent norms. 
\endproof

\proof[Proof of Lemma \ref{Ban-env-Cor}]
Assume that the condition i) is satisfied. Lemma \ref{cancel} gives 
$$
M(X,Y)'= X\odot Y'.
$$
Consequently, $M(X,Y)\equiv M(X,Y)''= (X\odot Y')'$ and, since  $X\odot Y'$ is separable (see the proof of Lemma \ref{Le-dens}), we get
$$
(X\odot Y')^*=M(X,Y).
$$
Furthermore, $\overline{H[X\odot Y']}$ is the closed subspace of the Banach space $X\odot Y'$, thus the claim follows by the Hahn-Banach theorem.

Suppose now that condition ii) holds. This time $X\odot Y'$ need not be a Banach space, therefore we will use Lemma \ref{Ban-env}. In order to do it we need to see that $X\odot Y'$ has nontrivial Boyd indices. 
First of all, it is easy to see that  
\begin{equation}\label{indeksyZ}
\alpha_{Z^{(1/2)}}=2\alpha_{Z} {\rm \ and\ } \beta_{Z^{(1/2)}}=2\beta_{Z}, 
\end{equation}
for an arbitrary r.i. q-B.f.s. $Z$. Furthermore, 
 $\beta_{X^{1/2}(Y')^{1/2}}\leq 1/2\beta_{X}+1/2\beta_{Y'}$ and together with the representation $X\odot Y'\equiv (X^{1/2} (Y')^{1/2})^{(1/2)}$ we get 
\[
\beta_{X\odot Y'}\leq \beta_X+\beta_{Y'}=1+\beta_X-\alpha_{Y}<1.
\]
Using once again equalities (\ref{indeksyZ}) and representation of $X\odot Y'$ we see that $\alpha_{X\odot Y'}>0$ if and only if $\alpha_{X^{1/2}Y'^{1/2}}>0$, which, in turn, is equivalent to $\beta_{[X^{1/2}Y'^{1/2}]'}<1$ (notice that $X^{1/2}Y'^{1/2}$ is order continuous, thus satisfies the semi-Fatou property and we are free to use (\ref{indeksysuma})). Using Lozanvskii duality theorem and assumption that $Y$, and so $Y''$, has nontrivial Boyd indices we conclude 
\[
\beta_{[X^{1/2}Y'^{1/2}]'}=\beta_{X'^{1/2}Y''^{1/2}}\leq \frac{1}{2} \beta_{X'}+\frac{1}{2} \beta_{Y''}<1,
\]
which proves that $X\odot Y'$ has nontrivial Boyd indices.

Applying now Lemma  \ref{Ban-env} to $Z=X\odot Y'$ we get $\overline{H[X\odot Y']}^{\wedge}=\overline{H[X\odot Y']^{\wedge}}=\overline{H[(X\odot Y')^{\wedge}]}$. It follows, by properties of Banach envelope that
\[
(\overline{H[X\odot Y']})^*=(\overline{H[X\odot Y']}^{\wedge})^*=(\overline{H[(X\odot Y')^{\wedge}]})^*.
\]
On the other hand
$$
((X\odot Y')^{\wedge})^*=(X\odot Y')^*=(X\odot Y')'=M(X,Y).
$$
Consequently, each functional on $\overline{H[X\odot Y']}$ extends to a functional on $(X\odot Y')^{\wedge}$, which proves the claim. 

It remains to explain (\ref{normrepr}). We will do it only for the case ii), since it works similarly, but easier for i). Let $\phi\in  (\overline{H[X\odot Y']})^*$. Then 
\[
\|\phi\|_{(\overline{H[X\odot Y']})^*}\approx \|\phi\|_{(\overline{H[(X\odot Y')^{\wedge}]})^*}
=\inf\|\tilde\phi\|_{((X\odot Y')^{\wedge})^*},
\]
where the infimum runs over all extensions $\tilde\phi$ of $\phi$ to  $(X\odot Y')^{\wedge}$. Of course, each such extension corresponds to some $\tilde g\in M(X,Y)$, thus we get
\[
\|\phi\|_{(\overline{H[X\odot Y']})^*}\approx \dist_{M(X,Y)}(g,H_-[M(X,Y)])=\dist_{M(X,Y)}(\chi_1g,\overline{H[M(X,Y)]}).
\] 
Notice that we have lost equality of norms because of  Lemma \ref{Ban-env}.
\endproof

\begin{remark}
The second part of the above proof could be done without Lemma  \ref{Ban-env}. In fact, we have explained that under assumption ii), $P$ is bounded on $X\odot Y'$. Consequently, if $\phi$ is continuous functional on $\overline{H[X\odot Y']}$, then 
\[
\phi\circ P:X\odot Y'\to \C
\]
defines its extension to the whole $X\odot Y'$. Since $(X\odot Y')^*=M(X,Y)$, we see that there is $f\in M(X,Y)$ that represents $\phi\circ P$, as well as its restriction $\phi$ to $\overline{H[X\odot Y']}$. We are therefore obliged to explain our choice of argument. First of all this argument does not imply directly the lower estimate of the norm of Hankel operator. 
On the other hand the author believes that Lemma  \ref{Ban-env} holds without assumption on Boyd indices of $Z$, or, at least,  boundedness of $P$ on $Z$ is not necessary. Thus, if we can prove (\ref{envrep}) for some  space $Z=X\odot Y'$ without boundedness of $P$, then the general Nehari theorem holds for $X,Y$ with the same proof as in Theorem \ref{extTw-Neh}.
\end{remark}
\begin{question}
Is it true that $H[Z]^{\wedge}=H[Z^{\wedge}]$, for each separable r.i. q-B.f.s.  $Z$ with $Z\subset L^1$?
\end{question}

It was already known since the paper of Janson,  Peetre and Semmes \cite{JPS84} (see also the classical paper \cite{CRW76} where usefulness of weak factorization in harmonic analysis was exhibited for the first time) that strong factorization from the original proof of Nahari theorem may be replaced by weak factorization, i.e. instead of factorization $f=gh$, we have only  $f=\sum_{n=1}^{\infty} g_nh_n$ (this idea was also used in \cite{TV87} and \cite{BG10}). 
It is worth to notice that in our argument with Banach envelope of Hardy spaces the  weak factorization  is hidden as well. Namely, we have that each $f\in \overline{H[X\odot Y']}^{\wedge}=\overline{H[X]\odot H[ Y']}^{\wedge}$, by Theorem \ref{Tw-prod} and properties of Banach envelopes, admits weak factorization of the form 
\[
f=\sum_{n=1}^{\infty} g_nh_n,
\]
where $g_n\in \overline{H[X]}$, $h_n\in \overline{H[Y']}$ and 
$$
\|f\|_{\overline{H[X\odot Y']}^{\wedge}}\approx \sum_{n=1}^{\infty} \|g_n\|_{\overline{H[X]}}\|h_n\|_{\overline{H[Y']}}.
$$

Similarly, as in the previous section, we may remove assumption on separability of $X$ from Theorem \ref{extTw-Neh}, but then it has a slightly different form.

\begin{theorem}\label{nonsepNeh}
Let $X,Y$ be two r.i. B.f. spaces, such that 
$X\subset Y$, $Y\in (FP)$, $Y$ has nontrivial Boyd indices and one of the following conditions holds:\\
i) $X\odot Y'=X_o\odot Y'$ is a Banach space,\\ 
ii) $\beta_X<\alpha_Y$ and  $X\not=L^{\infty}$,\\
iii) $X=L^{\infty}$ and $Y'$ is separable.\\
If the operator $H_b=PM_bJ\colon H[X]\to H[Y]$ is bounded, 
then there exists $a\in M(X,Y)$ such that $\hat a(n)=\hat b(n)$ for 
$n>0$ and $H_b=H_a$. Moreover, 
\[
c\dist_{M(X,Y)}(a,\overline{ HM(X,Y)})
\leq 
\|H_a\|_{H[X]\to H[Y]}
\leq \|P\|_{Y\to Y}\dist_{M(X,Y)}(a,\overline{ H[M(X,Y)]}),
\]
where the constant $c$ depends only on spaces $X,Y$. 
\end{theorem}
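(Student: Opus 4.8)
The strategy is to reduce each of the three cases to the already-proven separable version, Theorem \ref{extTw-Neh}, by replacing $X$ with a suitable separable substitute and then checking that nothing is lost at the level of symbols and norms. The common thread is the identity $M(X_o,Y)\equiv M(X,Y)$ from Lemma \ref{order multipliers} (applicable since $Y\in (FP)$ and, in the relevant cases, $L^\infty\subset X_o$), together with the fact that $H_b$ restricted to $H[X_o]$ is bounded with $\|H_b\|_{H[X_o]\to H[Y]}\le \|H_b\|_{H[X]\to H[Y]}$, so any symbol produced for the restriction works for the original operator and the norm inequalities only improve.

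First I would dispose of case iii), $X=L^\infty$. Here $X_o=\{0\}$, so as in the proof of Theorem \ref{BHnonsep} I replace the disc algebra's ambient space: work with $\mathcal C=\mathcal C(\T)$ in place of $L^\infty$, so $H[\mathcal C]$ is the disc algebra $\mathcal A$, which is separable, and $M(\mathcal C,Y)\equiv M(L^\infty,Y)\equiv Y$ since $\chi_0\in\mathcal C$. One must re-run the argument of Theorem \ref{extTw-Neh} with $X=\mathcal C$: the key auxiliary objects are $H[X]\odot H[Y']=H[\mathcal C\odot Y']\equiv H[Y']$ and its Banach envelope; since $Y'$ is assumed separable here, $H[Y']$ is separable and the density Lemma \ref{Le-dens} and the envelope Lemma \ref{Ban-env-Cor} go through verbatim (indeed case i) of Lemma \ref{Ban-env-Cor} applies because $\mathcal C\odot M(\mathcal C,Y)\equiv Y$ trivially and $\mathcal C, Y$ have the semi-Fatou property sufficient for the cited duality steps — one should double-check the Fatou hypotheses, but $Y\in(FP)$ is assumed and $\mathcal C'\equiv L^1\in(FP)$). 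This yields $a\in M(\mathcal C,Y)\equiv M(X,Y)$ with the right Fourier coefficients and the stated two-sided estimate.

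For cases i) and ii) with $X\ne L^\infty$, I pass to $X_o$, which is a separable r.i.\ B.f.s.\ (separability follows since $L^\infty\subset X_o$ is then automatic for r.i.\ spaces $\ne L^\infty$, so $X_o$ is order continuous hence separable), and it satisfies $X_o\subset X\subset Y$. In case ii) one checks $\beta_{X_o}=\beta_X<\alpha_Y$ directly, since Boyd indices of $X$ and $X_o$ coincide (the dilation operators have the same norms on $X$ and on its order-continuous part, as dilations of simple functions compute the relevant norms). In case i) the hypothesis is tailored so that $X_o\odot Y'=X\odot Y'$ is a Banach space; combined with $M(X_o,Y)\equiv M(X,Y)$ and Lemma \ref{cancel} applied to the pair $(X_o,Y)$, one gets $M(X_o,Y)'=X_o\odot Y'=X\odot Y'$, hence $(X_o\odot Y')^*=M(X_o,Y)=M(X,Y)$, which is exactly the input needed for case i) of Lemma \ref{Ban-env-Cor} (the factorization hypothesis $X_o\odot M(X_o,Y)=Y$ should be derived from $X\odot M(X,Y)=Y$ — this is the one place requiring a small argument, using $X_o\odot M(X_o,Y)=X_o\odot M(X,Y)$ and the fact that products with $Y'$ on both sides cancel via Lozanovskii, or alternatively one simply assumes the Banach-space hypothesis as stated and invokes Lemma \ref{cancel} for $(X_o,Y)$ directly since $X_o,Y\in(FP)$ fails — here $X_o$ need not have the Fatou property, so in fact one should route through $M(X_o,Y)\equiv M(X,Y)$ and work with $X$ wherever Fatou is needed, only using $X_o$ for separability). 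Once Theorem \ref{extTw-Neh} is applied to $(X_o,Y)$ we obtain $a\in M(X_o,Y)\equiv M(X,Y)$ with $\hat a(n)=\hat b(n)$ for $n>0$; then $H_b$ and $H_a$ agree on $\mathcal P_A$, hence on $H[X_o]$ by density, and since both are bounded on $H[X]$ and $\mathcal P_A$ is weak-$*$ dense in an appropriate sense — or more simply since they have the same Hankel matrix — one concludes $H_b=H_a$ on all of $H[X]$. The norm estimates transfer because $\|H_a\|_{H[X_o]\to H[Y]}\le\|H_a\|_{H[X]\to H[Y]}\le\|P\|_{Y\to Y}\dist_{M(X,Y)}(a,\overline{H[M(X,Y)]})$ (the upper bound is the easy direction, already proven at the start of Theorem \ref{extTw-Neh}'s proof and independent of separability), while the lower bound $c\,\dist_{M(X,Y)}(a,\overline{H[M(X,Y)]})\le\|H_a\|_{H[X_o]\to H[Y]}\le\|H_a\|_{H[X]\to H[Y]}$ comes from Theorem \ref{extTw-Neh} applied to $(X_o,Y)$.

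The main obstacle is bookkeeping around the Fatou property: Lemmas \ref{cancel} and \ref{Ban-env-Cor}(i) are stated for spaces with the Fatou property, but $X_o$ typically lacks it. The resolution is to never apply those lemmas to $X_o$ directly; instead one always replaces $X$ by $X_o$ only to gain separability and density of analytic polynomials, while all multiplier-space and factorization identities are phrased in terms of $M(X,Y)$ and $X\odot Y'$ (with $X$ having Fatou, as in the hypotheses of i)), using Lemma \ref{order multipliers} as the bridge $M(X_o,Y)\equiv M(X,Y)$ and the already-established $X_o\odot Y'\equiv X\odot Y'$ from the proof of Lemma \ref{Le-dens}. A secondary subtlety is verifying in case i) that the hypothesis ``$X\odot Y'=X_o\odot Y'$ is a Banach space'' genuinely matches the input ``$(X\odot Y')^* = M(X,Y)$ and $\overline{H[X\odot Y']}$ is a closed subspace of a Banach space'' required by Lemma \ref{Ban-env-Cor}(i); this should follow since a Banach q-B.f.s.\ that is separable has dual $M(X,Y)$ by the cancellation argument, after which Hahn--Banach finishes exactly as in the separable case.
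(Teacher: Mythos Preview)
Your approach is essentially the same as the paper's: reduce to the separable case by replacing $X$ with $X_o$ (or with $\mathcal{C}$ when $X=L^\infty$), invoke Theorem \ref{extTw-Neh} on the substitute pair, and transfer the symbol back via $M(X_o,Y)\equiv M(X,Y)$. The paper's proof is considerably more laconic---it simply asserts that ``the thesis of Lemma \ref{Ban-env-Cor} follows for $X_o$ in place of $X$'' and then applies Theorem \ref{extTw-Neh}---whereas you worry (correctly) about the Fatou bookkeeping and the mismatch between condition i) here (``$X\odot Y'=X_o\odot Y'$ is Banach'') and condition i) of Theorem \ref{extTw-Neh} (``$X\odot M(X,Y)=Y$''); your resolution in the final paragraph, routing through $(X_o\odot Y')'=M(X_o,Y)\equiv M(X,Y)$ and Hahn--Banach directly, is exactly what the paper intends but does not spell out.
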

\proof It is, of course, enough to consider only the case of nonseparable $X$. Suppose firstly, that $X\not= L^{\infty}$ and $X\odot Y'=X_o\odot Y'$ is a Banach space or $\beta_X<\alpha_Y$. Then $X_o\not =\{0\}$ and the thesis of Lemma \ref{Ban-env-Cor} follows for $X_o$ in place of $X$. Consequently, we just apply Theorem \ref{extTw-Neh} with $X_o$ and $Y$ in place of $X,Y$. 

In the case of $X= L^{\infty}$  we take disc algebra $\cA$ in place of $H[X_o]$. It is evident that $ L^{\infty}\odot Y'=\cC\odot Y'=Y'$ as well as $ H^{\infty}\odot H[Y']=\cA\odot H[Y']=H[Y']$. Once we know that $Y'$ is separable, Lemma \ref{Le-dens} holds and we may follow the proof of Theorem \ref{extTw-Neh} with  $\cA$ and $\cC$ in place of $H[X]$ and $X$, respectively. 
\endproof

Recall that Lorentz space $L^{p,q}$, where $0< q\leq \infty$ and $0<p< \infty$, is defined by the (quasi-) norm
\[
\|f\|_{L^{p,q}}=(\int_{0}^1 [f^*(s)s^{1/p}]^q\frac{ds}{s})^{1/q},
\]
with the standard modification when $q=\infty$.

A function $\varphi\colon [0,\infty)\to [0,\infty]$ is called the Orlicz function when it is convex, nondecreasing and $\varphi(0)=0$. Then the Orlicz space $L^{\varphi}$ is defined by the norm 
\[
\|f\|_{\varphi}=\inf\{\lambda>0\colon \int_{\T}\varphi(|f(t)|/\lambda)dm(t)\leq 1\}.
\]
We will use the standard notion $H^{p,q}:=H[L^{p,q}]$ and $H^{\varphi}=H[L^{\varphi}]$ for Hardy--Lorentz and Hardy--Orlicz spaces, respectively. 

Let us also mention, that description of the space of multipliers between two Orlicz spaces was already described in full generality in \cite{LT17}. This reads as follows
\[
M(L^{\varphi_1},L^{\varphi})=L^{\varphi\ominus\varphi_1},
\]
where the generalized Legendre transform $\varphi\ominus\varphi_1$ is defined as
\[
\varphi\ominus\varphi_1(u)=\sup_{v>0}\{\varphi(uv)-\varphi(v)\}\ {\rm for\ } u\geq0.
\]
It was also proved therein that $L^{\varphi_1}$ factorizes $L^{\varphi}$ if and only if there are constants $c,C,u_0>0$ such that for each $u>u_0$ there holds
\begin{equation}\label{Orfac}
c \varphi_1^{-1}(u)(\varphi\ominus\varphi_1)^{-1}(u)\leq \varphi^{-1}(u)\leq C \varphi_1^{-1}(u)(\varphi\ominus\varphi_1)^{-1}(u),
\end{equation}
where $\varphi^{-1}$ stands for the right continuous inverse of $\varphi$. 

Recall finally that Boyd indices of the Orlicz space  $L^{\varphi}$ coincides with Matuszewska--Orlicz indices of the function $\varphi$, i.e. 
\[
\alpha_{L^{\varphi}}=a_{\varphi}\ {\rm  and\ } \beta_{L^{\varphi}}=b_{\varphi},
\] 
where $a_{\varphi}$ is the lower and $b_{\varphi}$ - the upper Matuszewska--Orlicz index of $\varphi$ (we refer to \cite{LT79, Mal89} for respective definitions).

\begin{theorem}\label{Orlicz}
Let $\varphi,\varphi_1$ be two Orlicz functions such that $\varphi$ has nontrivial Matu\-szew\-ska--Orlicz indices.\\
a) The Toeplitz operator $T_a=PM_a$ is bounded from $H^{\varphi_1}$ to $H^{\varphi}$ if and only if $a\in L^{\varphi\ominus \varphi_1}$ and 
\[
\|a\|_{L^{\varphi\ominus \varphi_1}}\leq \|T_a\|_{H^{\varphi_1}\to H^{\varphi}}\leq \|P\|_{Y\to Y}\|a\|_{L^{\varphi\ominus \varphi_1}}.
\]
b) If additionally one of the conditions holds:\\
i)  inequalities (\ref{Orfac}) are satisfied and  $L^{\varphi_1}\not =L^{\infty}$,\\
ii) $b_{\varphi_1}<a_{\varphi}$ and  $L^{\varphi_1}\not =L^{\infty}$,\\ 
iii)  $L^{\varphi_1} =L^{\infty}$ and $L^{\varphi}$ is separable,\\
then the Hankel operator $H_b=PM_bJ$ is bounded from $H^{\varphi_1}$ to $H^{\varphi}$ if and only if  there exists $a\in L^{\varphi\ominus \varphi_1}$ such that $\hat a(n)=\hat b(n)$ for each
$n>0$. In this case $H_a=H_b$ and 
\[
c\dist_{ L^{\varphi\ominus \varphi_1}}(a,\overline{  H^{\varphi\ominus \varphi_1}})
\leq 
\|H_a\|_{H^{ \varphi_1}\to H^{ \varphi}}
\leq \|P\|_{L^{\varphi}\to L^{\varphi}}\dist_{ L^{\varphi\ominus \varphi_1}}(a,\overline{  H^{\varphi\ominus \varphi_1}}).
\]
\end{theorem}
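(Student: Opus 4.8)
The plan is to obtain Theorem~\ref{Orlicz} as the specialization of Theorems~\ref{BHnonsep} and~\ref{nonsepNeh} to the pair $X=L^{\varphi_1}$, $Y=L^{\varphi}$, so that essentially all of the work is a translation of the abstract function-space hypotheses and data into Orlicz language. The ingredients I will use are standard: every Orlicz space (with the Luxemburg norm) is a rearrangement invariant B.f.s.\ with the Fatou property; its associate space $(L^{\varphi})'$ is again an Orlicz space, hence a B.f.s.\ with the Fatou property; the Boyd indices of $L^{\varphi}$ coincide with the Matuszewska--Orlicz indices of $\varphi$, so that ``$\varphi$ has nontrivial Matuszewska--Orlicz indices'' means precisely ``$L^{\varphi}$ has nontrivial Boyd indices''; and, by \cite{LT17}, $M(L^{\varphi_1},L^{\varphi})=L^{\varphi\ominus\varphi_1}$, whence $H[M(L^{\varphi_1},L^{\varphi})]=H^{\varphi\ominus\varphi_1}$. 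I may also assume $L^{\varphi_1}\subset L^{\varphi}$ throughout, since otherwise $M(L^{\varphi_1},L^{\varphi})=L^{\varphi\ominus\varphi_1}=\{0\}$ and the assertions are degenerate.

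With this dictionary, part~(a) is immediate: the hypotheses of Theorem~\ref{BHnonsep} --- $X\subset Y$, $Y$ with nontrivial Boyd indices and the Fatou property --- hold for $X=L^{\varphi_1}$ and $Y=L^{\varphi}$, so $T_a\colon H^{\varphi_1}\to H^{\varphi}$ is bounded if and only if $a\in M(L^{\varphi_1},L^{\varphi})=L^{\varphi\ominus\varphi_1}$, and the displayed two-sided norm estimate is read off directly from the one in Theorem~\ref{BHnonsep} via the identity $M(L^{\varphi_1},L^{\varphi})=L^{\varphi\ominus\varphi_1}$.

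For part~(b) I will invoke Theorem~\ref{nonsepNeh} in the same manner, its ambient hypotheses ($L^{\varphi_1}\subset L^{\varphi}$, $L^{\varphi}$ with the Fatou property and nontrivial Boyd indices) holding as in part~(a); the only genuine point is to match the three alternative conditions. Condition~(ii), $b_{\varphi_1}<a_{\varphi}$ with $L^{\varphi_1}\neq L^{\infty}$, is literally condition~(ii) of Theorem~\ref{nonsepNeh} under the index identification (and $b_{\varphi_1}<a_{\varphi}$ moreover secures the ambient inclusion $L^{\varphi_1}\subset L^{\varphi}$), while condition~(iii), $L^{\varphi_1}=L^{\infty}$ with $L^{\varphi}$ separable, gives condition~(iii) there. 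The substantive one is condition~(i): by the factorization theorem of \cite{LT17}, the inequalities (\ref{Orfac}) hold if and only if $L^{\varphi_1}$ factorizes $L^{\varphi}$, i.e.\ $L^{\varphi_1}\odot M(L^{\varphi_1},L^{\varphi})=L^{\varphi}$; since $L^{\varphi_1}$ and $L^{\varphi}$ have the Fatou property, Lemma~\ref{cancel} then yields $M(L^{\varphi_1},L^{\varphi})'=L^{\varphi_1}\odot(L^{\varphi})'$, so that $L^{\varphi_1}\odot(L^{\varphi})'$, being an associate space, is a Banach function space and in particular a Banach space; it remains to observe that $L^{\varphi_1}\odot(L^{\varphi})'=(L^{\varphi_1})_o\odot(L^{\varphi})'$, which is automatic when $\varphi_1\in\Delta_2$ (so that $L^{\varphi_1}$ is separable) and in general follows from the density of simple functions together with Lemma~\ref{order multipliers}, as in the proof of Lemma~\ref{Le-dens}. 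Hence condition~(i) of Theorem~\ref{nonsepNeh} holds, and that theorem provides the conclusion, with $\overline{H[M(L^{\varphi_1},L^{\varphi})]}=\overline{H^{\varphi\ominus\varphi_1}}$ and the distance estimate transported through $M(L^{\varphi_1},L^{\varphi})=L^{\varphi\ominus\varphi_1}$.

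Thus the proof is almost entirely bookkeeping; the only step with any substance is the translation of (\ref{Orfac}) into the exact form ``$X\odot Y'=X_o\odot Y'$ is a Banach space'' demanded by Theorem~\ref{nonsepNeh}(i), which forces one to bring in the \cite{LT17} factorization criterion, Lemma~\ref{cancel}, and the identification of $X\odot Y'$ with $X_o\odot Y'$. (If one were willing to assume $\varphi_1\in\Delta_2$, so that $L^{\varphi_1}$ is separable, Theorem~\ref{extTw-Neh} could be applied directly and this last subtlety would not arise.)
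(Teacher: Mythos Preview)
Your approach is exactly the paper's: the proof there reads in full ``The proof is a routine verification of assumptions of Theorem~\ref{nonsepNeh} together with \cite[Theorem 2]{LT17}'', and you have correctly unpacked this into an application of Theorem~\ref{BHnonsep} for part~(a), Theorem~\ref{nonsepNeh} for part~(b), the identification $M(L^{\varphi_1},L^{\varphi})=L^{\varphi\ominus\varphi_1}$, and the factorization criterion~(\ref{Orfac}) from \cite{LT17}.

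One small slip: in matching condition~(iii) you write that ``$L^{\varphi}$ separable'' gives condition~(iii) of Theorem~\ref{nonsepNeh}, but that theorem asks for $Y'=(L^{\varphi})'$ to be separable, not $Y$. This is harmless here because the standing hypothesis that $\varphi$ has nontrivial Matuszewska--Orlicz indices already forces $\varphi\in\Delta_2\cap\nabla_2$, hence $(L^{\varphi})'=L^{\varphi^*}$ is separable as well; you should say so. Similarly, in your verification of condition~(i) the identity $L^{\varphi_1}\odot(L^{\varphi})'=(L^{\varphi_1})_o\odot(L^{\varphi})'$ does follow by the argument of Lemma~\ref{Le-dens} (with the roles of the two factors swapped, using that $(L^{\varphi})'$ is order continuous by the same $\nabla_2$ observation), but the reference to Lemma~\ref{order multipliers} is not really to the point---that lemma concerns multipliers, not products.
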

\proof The proof is a routine verification of assumptions of Theorem \ref{nonsepNeh} together with \cite[Theorem 2]{LT17}.
\endproof

\begin{theorem}\label{Lorentz}
Let $1\leq p_1, q_1, q\leq \infty$ and $1<p<\infty$. Assume also that either $p< p_1$, or $p=p_1$ and $q>q_1$.  Put  $p_2=\frac{pp_1}{p_1-p}$ when $p<p_1$, or $p_2=\infty$ when $p=p_1$ and $q_2=\frac{qq_1}{q_1-q}$ when $q< q_1$, or $q_2=\infty$ when $q\geq q_1$.\\
a) The Toeplitz operator $T_a=PM_a$ is bounded from $H^{p_1,q_1}$ to $H^{p,q}$ if and only if $a\in L^{p_2,q_2}$ and 
\[
\|a\|_{L^{p_2,q_2}}\leq \|T_a\|_{H^{p_1,q_1}\to H^{p,q}}\leq \|P\|_{L^{p,q}\to L^{p,q}}\|a\|_{L^{p_2,q_2}}.
\]
b) If $p<p_1$ or $p=p_1$, $q=q_1$, then 
then the Hankel operator $H_b=PM_bJ$ is bounded from  $H^{p_1,q_1}$ to $H^{p,q}$  if and only if  there exists $a\in L^{p_2,q_2}$ such that $\hat a(n)=\hat b(n)$ for each
$n>0$. In this case $H_a=H_b$ and 
\[
c\dist_{L^{p_2,q_2}}(a,\overline{  H^{p_2,q_2}})
\leq 
\|H_a\|_{H^{p_1,q_1}\to H^{p,q}}
\leq \|P\|_{L^{p,q}\to L^{p,q}}\dist_{L^{p_2,q_2}}(a,\overline{  H^{p_2,q_2}}).
\]
\end{theorem}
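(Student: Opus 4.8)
The plan is to obtain both parts as special cases of the general Brown--Halmos and Nehari theorems established above (Theorem~\ref{BHnonsep} for part a) and Theorem~\ref{nonsepNeh} for part b)), applied with $X=L^{p_1,q_1}$ and $Y=L^{p,q}$. Thus the proof reduces to checking that these concrete Lorentz spaces meet the hypotheses of those theorems, the only substantial ingredient being the identification of the relevant multiplier space.

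First I would assemble the needed facts on Lorentz spaces. Since $1<p<\infty$, the space $L^{p,q}$ is, for every $1\le q\le\infty$, equivalent to a rearrangement invariant B.f.s.\ with the Fatou property, and its Boyd indices satisfy $\alpha_{L^{p,q}}=\beta_{L^{p,q}}=1/p\in(0,1)$; thus $Y=L^{p,q}$ has nontrivial Boyd indices and the Fatou property. The standing assumptions (either $p<p_1$, or $p=p_1$ with $q\ge q_1$) give the inclusion $L^{p_1,q_1}\subset L^{p,q}$, and, since $p>1$ forces $p_1>1$ (or $p_1=\infty$, in which case $L^{p_1,q_1}=L^\infty$), the domain space $X=L^{p_1,q_1}$ is again equivalent to a B.f.s. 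The key computational input is the identity
\[
M(L^{p_1,q_1},L^{p,q})\equiv L^{p_2,q_2},
\]
with $p_2,q_2$ as in the statement (so $1/p_2=1/p-1/p_1$ and $1/q_2=1/q-1/q_1$, with the usual conventions for the extreme values); this follows from the classical H\"older-type inequality for Lorentz spaces together with the factorization $L^{p_2,q_2}\odot L^{p_1,q_1}=L^{p,q}$, and may also be quoted from the literature on pointwise products of Lorentz spaces. Granting this, part a) is an immediate application of Theorem~\ref{BHnonsep}.

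For part b) the conditions $X\subset Y$, nontriviality of the Boyd indices of $Y$ and the Fatou property of $Y$ are exactly those already checked; it remains only to verify, in each admitted case, one of the conditions i)--iii) of Theorem~\ref{nonsepNeh}. If $p<p_1$ and $L^{p_1,q_1}\neq L^\infty$, then $\beta_X=1/p_1<1/p=\alpha_Y$, so condition ii) applies. If $p<p_1$ and $L^{p_1,q_1}=L^\infty$ (that is $p_1=q_1=\infty$), then $M(X,Y)\equiv Y=L^{p,q}$ and one invokes condition iii), using that $Y'\equiv L^{p',q'}$ is separable when $q>1$. In the remaining case $p=p_1$, $q=q_1$ one has $X=Y=L^{p,q}$ and $M(X,Y)\equiv L^\infty$, while the Lozanovskii factorization theorem \eqref{Loz fact} gives $X\odot Y'=L^{p,q}\odot L^{p',q'}\equiv L^1$, a Banach space; when $q=\infty$ one additionally notes $X\odot Y'\equiv X_o\odot Y'$, which follows exactly as in the proof of Lemma~\ref{Le-dens} from the representation $X\odot Y'\equiv(X^{1/2}(Y')^{1/2})^{(1/2)}$, the order continuity of $Y'$ (hence of $X^{1/2}(Y')^{1/2}$ and $X_o^{1/2}(Y')^{1/2}$) and the coincidence of their K\"othe duals, so that condition i) holds. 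With the hypotheses verified, Theorem~\ref{nonsepNeh} produces $a\in L^{p_2,q_2}$ with $\widehat a(n)=\widehat b(n)$ for $n>0$, $H_a=H_b$, and the stated two-sided estimate, since $\overline{H[M(X,Y)]}=\overline{H^{p_2,q_2}}$.

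I expect the main difficulty to be not conceptual but a matter of careful bookkeeping: pinning down $M(L^{p_1,q_1},L^{p,q})\equiv L^{p_2,q_2}$, and, above all, correctly treating the degenerate and boundary situations where $p_1$, $q_1$ or $q$ equals $1$ or $\infty$ (so that $L^{p_1,q_1}$ may fail to be normable, reduce to $L^\infty$, or become trivial, and $Y'$ may fail to be separable) and matching each of these to the appropriate one of the conditions i)--iii) in Theorem~\ref{nonsepNeh}.
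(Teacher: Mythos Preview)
Your approach is essentially the same as the paper's: verify that $X=L^{p_1,q_1}$ and $Y=L^{p,q}$ satisfy the hypotheses of Theorems~\ref{BHnonsep} and~\ref{nonsepNeh}, quote the identification $M(L^{p_1,q_1},L^{p,q})=L^{p_2,q_2}$ (which the paper cites from \cite{KLM18}), and read off the conclusions. Your case analysis for part b) is in fact more detailed than the paper's own proof, which simply observes that $p<p_1$ yields $\beta_X=1/p_1<1/p=\alpha_Y$ (condition ii)) and declares the diagonal case $p=p_1$, $q=q_1$ ``evident'', without separating out the $X=L^\infty$ situation or invoking condition i) explicitly.
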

\proof The condition $p< p_1$ or $p=p_1$ and $q>q_1$ guarantees that $L^{p_1,q_1}\subset L^{p,q}$ and $1<p<\infty$ means that $P$ is bounded on $L^{p,q}$. It is also known that in such a case $M(L^{p_1,q_1},L^{p,q})=L^{p_2,q_2}$  (see \cite[Theorem 4]{KLM18}). Therefore, application of Theorem \ref{BHnonsep} proves the case of Toeplitz operators. 
It follows that the assumption $p< p_1$ implies point ii) of Theorem \ref{nonsepNeh}, while the case of $p=p_1$, $q=q_1$ is evident. Thus point b) is also proved. 
\endproof

The following example illustrates the independence of assumptions i) and ii) in Theorem \ref{extTw-Neh}.

\begin{example}
Consider $X=L^{p_1,1}$, $Y=L^{p,\infty}$ with  $1<p<p_1<\infty$. Then $X$ is separable, $P$ is bounded on $Y$ and $X\subset Y$, as required in Theorem \ref{extTw-Neh}. Furthermore, from \cite{KLM14} Theorem  10 we know that $X$ does not factorize $Y$, i.e. assumption i) is not satisfied. On the other hand, $Y'=L^{p',1}$ and applying   \cite[Theorem 7]{KLM14}, we have  
\[
X\odot Y'=L^{p_1,1}\odot L^{p',1}=L^{p_2,1/2},
\]
where $1/p_2=1/p_1+1/p'=1+1/p_1-1/p<1$. This means that $L^{p_2,1/2}$ satisfies condition ii) in Theorem \ref{extTw-Neh}.

On the other hand, taking a separable r.i. B.f.s. $Y$ with nontrivial Boyd indices and $X\equiv Y$, condition i) of Theorem \ref{extTw-Neh} is trivially satisfied, since we have by Lozanovskii factorization theorem $X\odot X'\equiv X_o\odot X'\equiv L^1$. Evidently, condition ii) cannot be satisfied in this case. 
\end{example}

\section{Measure of noncompactness of Toeplitz operators}
It is well known (see \cite{BH63/64,BS06}) that Toeplitz operator $T_a$ acting on $H^2$ is never compact, unless the trivial case of $a=0$. It follows from the fact that $(\chi_n)_{n=0}^{\infty}$ is weakly null in $H^2$ while $T_a\chi_n\not \to 0$ in $H^2$. This proof could be generalized to the case of $T_a:H[X]\to H[Y]$ when $P$ is bounded on $Y$ and $(\chi_n)_{n=0}^{\infty}$ is weakly null sequence in $H[X]$ (it happens when characteristic functions are order continuous in $X$ - see \cite{MRP15}). However,  $(\chi_n)_{n=0}^{\infty}$ is not weakly null for example  in $H^{\infty}$ (neither in the disc algebra $\mathcal{A}$) and therefore we need a new argument that applies in the general situation of our considerations. It apears that our argument  gives estimation of the measure of noncompactness of Toeplitz operator. 

Let us recall that for a given set $A$ in a Banach space, its measure of noncompactness $\alpha(A)$ is defined as 
\[
\alpha(A)=\inf\{\delta>0\colon A\subset \sum_{k=1}^{N}B_k, diam(B_k)\leq \delta \ {\rm and\ } N<\infty\}.
\]
Then the measure of noncompactness $\alpha(T)$ of a given operator $T:X\to Y$ is just defined as the measure of noncompactness of the set $T(B(X))$ in $Y$, where $B(X)$ is the unit ball of $X$. 

\begin{theorem}\label{comp-Toepl}
Let $X,Y$ be r.i. B.f. spaces such that $X\subset Y$ and $Y$ has nontrivial Boyd indices. Suppose $a\in M(X,Y)$. Then 
the measure of noncompactness of the Toeplitz operator $T_a$ satisfies 
$$
\alpha(T_a)\geq m\max_{n\in \Z}|\hat a(n)|,
$$
where $m$ is a constant of inclusion $Y\subset L^1$. In particular, 
operator $T_a:H[X]\to H[Y]$ is compact if and only if $a=0$.
\end{theorem}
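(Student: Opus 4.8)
The plan is to produce, inside the image $T_a(B(H[X]))$, an infinite sequence that is uniformly separated, with separation constant arbitrarily close to $m\,\max_{n\in\Z}|\hat a(n)|$; since a set containing a $\delta$-separated sequence cannot be covered by finitely many sets of diameter $<\delta$, this immediately gives $\alpha(T_a)\ge m\,\max_{n\in\Z}|\hat a(n)|$. We may assume $a\ne 0$. Two preliminary facts are needed. First, $a\in L^1$: indeed $M(X,Y)$ is an r.i.\ B.f.s.\ by \cite[Theorem 2.2]{KLM13}, and every r.i.\ B.f.s.\ on $\T$ is contained in $L^1$, so by the Riemann--Lebesgue lemma $\hat a(n)\to 0$ as $|n|\to\infty$ and the maximum $|\hat a(n_0)|=\max_{n\in\Z}|\hat a(n)|$ is attained at some $n_0\in\Z$. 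Second, $T_a$ does map $H[X]$ boundedly into $H[Y]$, because $af\in Y$ for $f\in H[X]$ and $P$ is bounded on $Y$ by the nontriviality of its Boyd indices; moreover $T_a\chi_j=P(a\chi_j)$ yields the Toeplitz-matrix identity $\widehat{T_a\chi_j}(k)=\hat a(k-j)$ for $j,k\ge 0$ (and $\widehat{T_a\chi_j}(k)=0$ for $k<0$), which is the only structural property of $T_a$ the argument uses.

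The separated sequence will be $(T_a\chi_{N_j})_{j\ge1}$ for a sufficiently lacunary choice of exponents $0\le N_1<N_2<\cdots$. Each $\chi_N$ lies in the unit ball of $H[X]$ because $|\chi_N|=\chi_\T$ pointwise (with the customary normalization $\|\chi_\T\|_X=1$), so $T_a\chi_{N_j}\in T_a(B(H[X]))$. Given $\varepsilon>0$, I would first fix $L$ with $|\hat a(l)|<\varepsilon$ for all $l\ge L$, then choose $N_1\ge\max\{0,-n_0\}$ and, inductively, $N_j>N_{j-1}$ so large that $N_j-N_{j-1}+n_0\ge L$, which forces $N_j-N_i+n_0\ge L$ for every $i<j$. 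The role of the condition $N_1\ge -n_0$ is purely bookkeeping: it guarantees that the test frequency $k=N_j+n_0$ used below is nonnegative, so that the Toeplitz identity applies there.

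The estimate then runs as follows. For $i<j$, the function $T_a\chi_{N_j}-T_a\chi_{N_i}$ lies in $H[Y]\subset L^1$, so each of its Fourier coefficients is bounded in modulus by its $L^1$-norm, which in turn is bounded by $m^{-1}\|T_a\chi_{N_j}-T_a\chi_{N_i}\|_Y$, $m$ being the constant in $m\|g\|_{L^1}\le\|g\|_Y$ for $g\in Y$. Evaluating at $k=N_j+n_0\ge 0$ and using the Toeplitz identity termwise,
\[
\|T_a\chi_{N_j}-T_a\chi_{N_i}\|_Y\ \ge\ m\,\bigl|\hat a(n_0)-\hat a(N_j-N_i+n_0)\bigr|\ \ge\ m\bigl(|\hat a(n_0)|-\varepsilon\bigr),
\]
where the last step uses $N_j-N_i+n_0\ge L$. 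Hence $(T_a\chi_{N_j})_j$ is an $m(|\hat a(n_0)|-\varepsilon)$-separated sequence in $T_a(B(H[X]))$, so $\alpha(T_a)\ge m(|\hat a(n_0)|-\varepsilon)$; letting $\varepsilon\to 0$ gives the claimed inequality. Finally, if $T_a$ is compact then $\alpha(T_a)=0$, forcing $\hat a(n)=0$ for all $n$ and hence $a=0$; the converse is trivial.

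I do not expect a genuine obstacle here, but it is worth saying what the real point is. The classical argument — $(\chi_n)$ is weakly null in $H^2$ while $\|T_a\chi_n\|\not\to 0$ — does not transfer, since $(\chi_n)$ need not be weakly null (e.g.\ in $H^\infty$ or the disc algebra), so any weak-compactness input must be avoided. The substitute is entirely quantitative: one passes through the inclusion $Y\subset L^1$ to reduce matters to Fourier coefficients, where the Toeplitz structure gives exact values, and then a lacunary choice of frequencies together with Riemann--Lebesgue makes the cross-terms $\hat a(N_j-N_i+n_0)$ negligible. Everything else is bookkeeping: checking that all evaluated frequencies are nonnegative, and that $\chi_N\in B(H[X])$.
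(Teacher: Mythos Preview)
Your proof is correct and follows essentially the same approach as the paper's: construct an infinite separated sequence $(T_a\chi_{N_j})$ in $T_a(B(H[X]))$ by choosing the $N_j$'s so that, at a suitable test frequency, one term of the difference picks out the maximal Fourier coefficient $\hat a(n_0)$ while the cross term $\hat a(N_j-N_i+n_0)$ is made small via Riemann--Lebesgue, then pass through the inclusion $Y\subset L^1$. The only cosmetic difference is that the paper chooses the $N_j$'s in arithmetic progression and evaluates at a slightly different frequency, whereas you use a general lacunary choice and evaluate at $N_j+n_0$; the substance is identical.
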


\proof
Assume that $a\not =0$. 
We will show that for each $\epsilon>0$ there is a sequence of indices $(k_n)_{n=0}^{\infty}$  such that $\|T_a\chi_{k_n}- T_a\chi_{k_l}\|_{H[Y]}\geq  (1-\epsilon)m\|(\hat a(n))\|_{\infty}$ for each $0\leq n,l$ with $n\not = l$. Of course, it will imply our claim. 

First of all notice that $a\in M(X,Y)$ implies that $a\in Y\subset L^1$.  Since $a\chi_k\in Y$, for each $k$, and $P$ is bounded on $Y$ we have $T_a\chi_k=P(a\chi_k)\in Y$. In consequence $T_a\chi_k\in L^1$ for each $k$. 

Let $c=\sup_{n\in \Z}|\hat a(n)|$. Then, by the Riemann--Lebesgue theorem, there is $s\in \Z$ such that $c=|\hat a(s)|$. We put $k_0=-\min\{0,s\}$. 
Notice that $\widehat{T_a\chi_k}(n)=\hat a(n-k)$ for $n\geq 0$ and $\widehat{T_a\chi_k}(n)=0$ for $n< 0$. In particular, for each $k\geq k_0$ we have $\widehat{T_a\chi_k}(s+k)=\hat a(s)$ and for these $k$ there holds 
\[
\|T_a\chi_k\|_1\geq c=|\hat a(s)|.
\]
We are in a position to find the announced sequence. We have already determined $k_0$. Without lost of generality we may assume that $s<0$, i.e. $k_0=-s$ (if $k_0=0$ the proof is analogous). Fix $\epsilon>0$.
 Thanks to the Riemann--Lebesgue theorem  there is $k_1>k_0$ such that for each  $k\geq k_1$ there holds $|\hat a(-k)|\leq c\epsilon$. Then 
\[
\|T_a\chi_{k_0}-T_a\chi_{k_1}\|_1\geq |\widehat{T_a\chi_{k_0}}(0)-\widehat{T_a\chi_{k_1}}(0)|=|\hat a(-k_0)-\hat a(-k_1)|\geq (1-\epsilon)c.
\] 
Then we choose $k_n=k_0+n(k_1-k_0)$. Thus for $0\leq d<n$ there holds
\[
\|T_a\chi_{k_d}-T_a\chi_{k_n}\|_1\geq |\widehat{T_a\chi_{k_d}}(d(k_1-k_0))-\widehat{T_a\chi_{k_n}}(d(k_1-k_0))|
\]
\[
=|\hat a(-k_0)-\hat a(-k_0-(n-d)(k_1-k_0))|\geq (1-\epsilon)c,
\] 
where the last inequality follows from the fact that $k_0+(n-d)(k_1-k_0)\geq k_1$, which in turn implies $|\hat a(-k_0-(n-d)(k_1-k_0))|\leq \epsilon$. 
Finally, since $\|f\|_Y\geq m\|f\|_1$ for each $f\in Y$ we conclude that 
$\|T_a\chi_{k_n}- T_a\chi_{k_l}\|_{H[Y]}\geq (1-\epsilon)mc$ for each $0\leq n,l$ with $n\not = l$. 
\endproof

{\bf Acknowledgments} The author wish to thank Professor Alexei Karlovich for inspiring discussions on the subject and  Professors Lech Maligranda and Anna Kami\'nska for valuable remarks and advices. He is also grateful to the anonymous referee  for many valuable remarks that significantly improved  presentation of the paper.


\end{document}